\newtheorem{theorem}{Theorem}[subsection]
\newtheorem{theorem*}{Theorem}
\newtheorem{lemma}[theorem]{Lemma}
\newtheorem{corollary}[theorem]{Corollary}
\newtheorem{definition}[theorem]{Definition}
\newtheorem{example}[theorem]{Example}
\theoremstyle{remark}
\newtheorem{remark}[theorem]{Remark}
\newtheorem{conjecture}[theorem]{Conjecture}
\newtheorem{proposition}[theorem]{Proposition}
\numberwithin{equation}{subsection}
\title[Very stable and wobbly loci for elliptic curves]{Very stable and wobbly loci for elliptic curves}
\author{Kuntal Banerjee}
\address{Centre for Quantum Topology and Its Applications (quanTA) and Department of Mathematics and Statistics, University of Saskatchewan, SK, Canada~ S7N 5E6}
\email{kub129@usask.ca}
\author{Steven Rayan}
\address{Centre for Quantum Topology and Its Applications (quanTA) and Department of Mathematics and Statistics, University of Saskatchewan, SK, Canada~ S7N 5E6}
\email{rayan@math.usask.ca}
\begin{document}
\begin{abstract}
    We explore very stable and wobbly bundles, twisted in a particular sense by a line bundle, over complex algebraic curves of genus $1$. We verify that twisted stable bundles on an elliptic curve are not very stable for any positive twist. We utilize semistability of trivially twisted very stable bundles to prove that the wobbly locus is always a divisor in the moduli space of semistable bundles on a genus $1$ curve. We prove, by extension, a conjecture regarding the closedness and dimension of the wobbly locus in this setting. This conjecture was originally formulated by Drinfeld in higher genus.
\end{abstract}
\maketitle 
\tableofcontents

\let\thefootnote\relax\footnotetext{2020\textit{ Mathematics Subject Classification.} 14C20, 14H52.}\let\thefootnote\relax\footnotetext{\textit{Keywords and phrases.} Very stable bundle, wobbly bundle, elliptic curve, algebraic curve, Higgs bundle, moduli space, symmetric product, projective bundle.}

\section{Statement of results}\label{state}

Laumon introduced \emph{very stable bundles}, proving that the very stable locus is an open dense subset in the moduli space of stable bundles \cite{Gera}. Hausel subsequently studied \cite{Hausel} the global nilpotent cone for $\mbox{SL}(2,\mathbb C)$-Higgs bundles with a fixed determinant line bundle $\Lambda$ of degree $1$ on a curve of genus $\geq 2$. The zero set of the determinant morphism (valued in global sections of $K_X^2$) contains the moduli of stable bundles of rank $2$ with fixed determinant $\Lambda$, as an open subset. The inclusion is simply $[E]\mapsto [(E, 0)]$. This nilpotent cone coincides with the downward Morse flow of the moduli space of Higgs bundles (cf. \cite{Hausel}, Theorem 4.4.2). The \emph{wobbly locus} $\mathcal{W}$ is the collection of stable bundles that admit a nonzero nilpotent Higgs field. This definition of the wobbly locus was exploited by Pal and Pauly \cite{Palpau}, who investigated the irreducible components of the wobbly locus in order to prove a conjecture that Laumon attributes to Drinfeld in \cite{Gera}. The claim is that the wobbly locus $\mathcal{W}$ is a closed subset of pure codimension $1$. In a separate paper \cite{PalG}, Pal proves Drinfeld's conjecture in arbitrary rank. However, the irreducible components are not easily understood for higher ranks because we lose access to useful information about their dimensions. In spite of this, Pauly and Pe\'on-Nieto realized the importance of locating a very stable bundle $E$ via the sections of its twisted endomorphism bundle $\mbox{End}E\otimes K_X$ (cf. \cite{Peon}), imposing a condition of properness and appealing to the semiprojectivity of the moduli space. More recently, Hausel and Hitchin \cite{Hausel2021VerySH} introduced \emph{very stable Higgs bundles} by appealing to the upward flow of $\mathbb C^\times$-action on the moduli space of Higgs bundles.\\

Working in this context, we focus on curves of genus $0$ and $1$. It is common to investigate moduli spaces of parabolic Higgs bundles on such curves; however, instead of puncturing the curve at certain marked points and maintaining the canonical line bundle as the sheaf of values of the Higgs field, we keep the curve compact but allow the Higgs field to take values in \emph{any} line bundle $L$.  We say that the Higgs field is \emph{twisted} by $L$ or, accordingly, that $L$ is the \emph{twist} of the Higgs field. Thus far, we have not encountered literature concerning very stable and wobbly loci in this twisted setting. In order to characterize very stable and wobbly bundles here, we shall need to rely on several well-known results about the geometry of symmetric products of a smooth projective curve. For sufficiently large $n$, the $n$-fold symmetric product of a curve $X$, $\mbox{Sym}^n(X)$, can be realized as a projective bundle over $\mbox{Pic}^0(X)$. Carefully studying $\mbox{Sym}^n(X)$ for arbitrary curves $X$ and any $n\geq1$, Macdonald \cite{Mac} computed their Betti numbers and the cohomological invariants of their closed subvarieties. Hitchin \cite{Hitch2} and Gothen \cite{gothen} utilized the now classical Macdonald's formula for the Betti numbers of $\mbox{Sym}^n(X)$ to compute Poincar\'e polynomials of moduli spaces of stable Higgs bundles for ranks up to and including $3$ on curves of genus $g\geq2$. In a context close the spirit of the present manuscript, a similar application of thi formula was used to compute Betti numbers of moduli spaces of $A$-type quiver bundles in low genus, as a means to accessing similar results for twisted Higgs bundle moduli spaces at genus $0$ and $1$ \cite{Ste2}.  We refer the reader to \cite{aspects} for more on these subjects.  In our methods, we will rely on Macdonald's formula in at least one major step.  Our techniques also depend intrinsically upon results about divisors in symmetric products as per, for example, \cite{Kou,Mus,Arb}.

Now, we will establish some of the basic notation required to state our results. Let us continue to use $X$ to denote a smooth complex algebraic curve; likewise, $L$ continues to denote a choice of holomorphic line bundle on $X$. Let $E\to X$ be a holomorphic vector bundle of rank $r$ so that the only nilpotent element $\phi\in H^0(\text{End}E\otimes L)$ is $0$. Such a bundle is said to be an $L$-{\textit{very stable}} bundle. We call a bundle $L$-{\textit{wobbly}} if it is not $L$-very stable. We will refer the $L$-very stable (respectively, the $L$-wobbly) bundles to \textit{very stable} (respectively, \textit{wobbly}) if the twist line bundle $L$ is well understood from the context. In particular, we call a bundle \textit{canonically very stable} (respectively, \textit{canonically wobbly}) if $L = K_X$ is the canonical line bundle on $X$. In the literature of which we are aware, the canonically very stable bundles are referred as ``very stable bundles''. We remark that any line bundle is immediately very stable, while a general bundle $E$ is $L$-very stable if and only if $E\otimes M$ is very stable for a line bundle $M$. Indeed $\phi^i = 0$ if and only if $\phi^i\otimes \operatorname{I} = 0$ where $\operatorname{I}: M\to M$ denotes the identity morphism. From the vantage point of linear algebra, the strictly upper triangular and strictly lower triangular matrices are the first examples of nilpotent elements, and the construction of nilpotent bundle morphisms begins in the same way. One may, in low genus, exploit the wealth of known information about holomorphic bundles and their maps --- for example, the Birkhoff-Grothendieck splitting when working in genus $0$ --- to create globally strictly triangular morphisms as needed.

In service to stating our main results, let us fix the definitions of the very stable and wobbly loci, recognizing that very stable bundles are semistable for a twist of sufficiently large degree. This follows after the assumption of property (\ref{ss}) in Lemma \ref{vst}. Below, $\mathcal{M}_X^{ss}(r, d)$ denotes the moduli space of semistable rank $r$, degree $d$ bundles on $X$.

\begin{definition}
The subsets of $\mathcal{M}_X^{ss}(r, d)$ consisting, respectively, of $L$-very stable and $L$-wobbly bundles are, respectively, the \emph{very stable locus} and the \emph{wobbly locus}. We denote these by the symbols $\mathcal{V}(r, d, L)$ and $\mathcal{W}(r, d, L)$, respectively.
\end{definition}

We now state the main theorem that will be developed in later sections.

\begin{theorem*}
Let $X$ be a complex elliptic curve, let $r$ be an integer greater than or equal to $2$, let $d$ be any integer, and let $\mathcal{E}(r, d)$ denote the set of isomorphism classes of indecomposable bundles of rank $r$ and degree $d$ on $X$.  \begin{enumerate}
\item Any holomorphic bundle $E\in \mathcal{E}(r, 0)$ is canonically wobbly. A stable bundle $E\in\mathcal{E}(r, d)$, should one exist, is canonically very stable. A polystable bundle $E \cong \bigoplus_{i = 1}^n E_i$, where the summands have equal slopes, is canonically very stable if and only if $E_i\ncong E_j$ for all $i\neq j$. 
\item Let $L\to X$ be a twist of degree $1$. A bundle $E\in \mathcal{E}(2, 1)$ is $L$-very stable if and only if $\det(E)\neq L$.
\item Let $L\to X$ be a twist with a degree $\geq 2$. A stable bundle $E\in\mathcal{E}(r, d)$ is $L$-wobbly.
\item If $(r, d) = h > 1$, then the canonically wobbly locus forms a closed irreducible subvariety of codimension $1$ inside $\mathcal{M}_X^{ss}(r, d)$, and hence is a divisor.
\item The first Chern class of the wobbly divisor is $2(h\cdot \eta - \sigma)$ for some generators $\eta, \sigma\in H^2(\emph{Sym}^h(X), \mathbb{Z})$.
\end{enumerate}
\end{theorem*}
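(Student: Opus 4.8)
The plan is to leverage the identification of the moduli space with a symmetric product that already underlies the proof of part (4), and then to reduce the computation of the first Chern class of the wobbly divisor to a classical intersection-theoretic computation on $\operatorname{Sym}^h(X)$. Recall from the proof of (4) that, with $h=(r,d)>1$, every polystable bundle of rank $r$ and degree $d$ on $X$ decomposes as a direct sum of $h$ stable bundles of rank $r/h$ and degree $d/h$; the isomorphism classes of these summands are points of the moduli space of such stable bundles, which on an elliptic curve is itself isomorphic to $X$. Sending a polystable bundle to the unordered collection of its summands yields an isomorphism $\mathcal{M}_X^{ss}(r,d)\cong\operatorname{Sym}^h(X)$. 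Under this isomorphism, part (1) identifies the canonically wobbly locus with the locus of $h$-tuples having at least two coincident points, that is, with the reduced big diagonal $\Delta\subset\operatorname{Sym}^h(X)$. Thus it remains to compute $c_1(\mathcal{O}(\Delta))=[\Delta]\in H^2(\operatorname{Sym}^h(X),\mathbb{Z})$.

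Next I would fix the two generators of $H^2$. For $h\geq 1$ the Abel--Jacobi map $a\colon\operatorname{Sym}^h(X)\to\mathrm{Pic}^h(X)\cong X$ realizes $\operatorname{Sym}^h(X)$ as a $\mathbb{P}^{h-1}$-bundle, since Riemann--Roch on the elliptic curve gives $\dim|\mathcal{L}|=h-1$ for every $\mathcal{L}$ of degree $h$. By Macdonald's computation of the cohomology of symmetric products, $H^2(\operatorname{Sym}^h(X),\mathbb{Z})$ is free of rank two, and I would take as generators the point class $\eta=[\{D:D\geq P\}]$, which restricts to the hyperplane class on each fiber $\mathbb{P}^{h-1}=|\mathcal{L}|$, and the fiber class $\sigma=a^{*}[\mathrm{pt}]$, equivalently the pullback of the theta divisor (a single point when $g=1$). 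These satisfy $\sigma^2=0$, $\eta^{h-1}\sigma=1$, and $\eta^{h}=1$, the last because $h$ general point-conditions cut out the single divisor $P_1+\cdots+P_h$.

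Finally I would determine $[\Delta]=a\eta+b\sigma$ by two intersection numbers. Pairing with $\eta^{h-2}\sigma$ restricts $\Delta$ to a single fiber $|\mathcal{L}|\cong\mathbb{P}^{h-1}$, where it becomes the discriminant hypersurface of divisors with a repeated point; a general pencil in $|\mathcal{L}|$ is base-point-free for $h\geq 2$ and defines a degree-$h$ map $X\to\mathbb{P}^1$, so Riemann--Hurwitz, $2g_X-2=-2h+\deg R$, yields $\deg R=2h$ branch points and hence $a=[\Delta]\cdot\eta^{h-2}\sigma=2h$. Pairing with $\eta^{h-1}$ restricts to the curve $\{P_1+\cdots+P_{h-1}+Q:Q\in X\}\cong X$, which meets $\Delta$ exactly when $Q\in\{P_1,\dots,P_{h-1}\}$; a local computation in $\operatorname{Sym}^2(X)$ with coordinates $(e_1,e_2)$ shows the defining equation $e_1^2-4e_2$ pulls back to a perfect square, so each of these $h-1$ intersections has multiplicity $2$ and $[\Delta]\cdot\eta^{h-1}=2(h-1)$. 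Combining this with $[\Delta]\cdot\eta^{h-1}=a\,\eta^{h}+b\,\sigma\eta^{h-1}=2h+b$ gives $b=-2$, whence $[\Delta]=2h\eta-2\sigma=2(h\eta-\sigma)$, as claimed. The main obstacle is bookkeeping the two factors of $2$ correctly: one must verify that the reduced big diagonal already carries the ramification data of the discriminant (rather than appearing with an artificial multiplicity) and that the normalization of $\eta$ by the point class is held fixed throughout. The cleanest insurance against sign and normalization errors is to specialize Macdonald's general diagonal-class formula $[\Delta]=2((h+g-1)x-\theta)$ to $g=1$, which reproduces $2(h\eta-\sigma)$ directly.
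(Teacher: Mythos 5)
Your proposal, read against the full statement, proves only part (5): parts (1)--(3) are nowhere argued but are used as inputs (you invoke ``part (1)'' to identify the canonically wobbly polystable bundles with the big diagonal), and for part (4) you assert rather than prove that the wobbly locus is the reduced big diagonal and that it is closed and irreducible. The paper's proofs of (1)--(3) are genuinely separate pieces of work: the degree-$0$ indecomposable case uses Atiyah's bundles $F_r$ and the extension $0\to M\to E\to F_{r-1}\otimes M\to 0$ together with the lemma producing square-zero Higgs fields from extensions with $H^0(\operatorname{Hom}(E'',E'\otimes L))\neq 0$; the $\deg(L)\geq 2$ case is a Riemann--Roch estimate on $\operatorname{Hom}(E',\mathcal{O}_X\otimes L)$ applied to $0\to\mathcal{O}_X\to E\to E'\to 0$; the rank-$2$, degree-$1$ criterion $\det(E)\neq L$ requires the line-subbundle analysis $0\to\xi\to E\to\xi^{-1}\otimes\delta\to 0$; and the polystable criterion (wobbly iff $E_i\cong E_j$ for some $i\neq j$) rests on the standard fact about homomorphisms between stable bundles of equal slope. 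For (4), the paper establishes closedness, irreducibility, and dimension $h-1$ by exhibiting the wobbly locus as $\mathcal{W}(h-1,2,1,\dots,1)$, the image of the diagonal morphism from the compact irreducible variety $X\times\operatorname{Sym}^{h-2}(X)$, together with the filtration of standard subloci; none of this is in your write-up. So as a proof of the theorem as stated, the attempt has a substantial gap.

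That said, the part you do prove is correct, and by a genuinely different route than the paper's. The paper obtains $\operatorname{cl}(\Delta(s,1^{h-s}))=s\bigl(h\eta^{s-1}-(s-1)\eta^{s-2}\sigma\bigr)$ from Macdonald's general diagonal-class formula, computed via pullbacks $\Phi_N^*(\eta)$, $\Phi_N^*(\theta)$ and a generating-function bookkeeping, and then specializes to $s=2$. You instead fix the two generators by the projective-bundle structure of the Abel--Jacobi map and pin down $[\Delta]=a\eta+b\sigma$ by two test intersections: $a=2h$ from the degree of the discriminant in a fiber $|\mathcal{L}|\cong\mathbb{P}^{h-1}$ via Riemann--Hurwitz on a general base-point-free pencil, and $[\Delta]\cdot\eta^{h-1}=2(h-1)$ from the multiplicity-$2$ tangency of the curve $Q\mapsto P_1+\cdots+P_{h-1}+Q$ with the diagonal (your local check $e_1^2-4e_2\mapsto t^2$ in $\operatorname{Sym}^2$ is exactly right, and it also answers your own worry about the reduced structure: the reduced diagonal meets a Lefschetz pencil transversally, once per simple branch point). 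Your normalizations $\eta^h=\sigma\eta^{h-1}=1$, $\sigma^2=0$ are consistent with the paper's relation $(\xi\xi'-\eta)\cdot\eta^{h-1}=0$, and your closing specialization of Macdonald's $2\bigl((h+g-1)x-\theta\bigr)$ at $g=1$ is precisely the paper's computation. The trade-off: your method is elementary and self-contained for the divisor class alone, whereas the paper's machinery yields the classes of all the higher-codimension wobbly subloci $\Delta(s,1^{h-s})$ simultaneously, which the filtration results in the paper actually use.
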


\noindent\textbf{Acknowledgements.} The authors thank Christian Pauly for preliminary discussions on this subject after a lecture of his during the Workshop on the Geometry, Algebra, and Physics of Higgs Bundles (23w5082). The workshop, which was co-organized by the second-named author, took place at the Kelowna site of the Banff International Research Station (BIRS) in May-June 2023. Both authors thank the BIRS Kelowna team for providing a hospitable environment for mathematical collaboration as well as the other organizers --- Lara Anderson, Antoine Bourget, and Laura Schaposnik --- for their scientific and logistical efforts.  The second-named author acknowledges Eloise Hamilton for a continuing discourse around twisted Higgs bundles in low genus that was advanced in a significant way at the same venue. The second-named author was partially supported by a Natural Sciences and Engineering Research Council of Canada (NSERC) Discovery Grant during this work. The first-named author was supported by a University of Saskatchewan Graduate Teaching Fellowship (GTF).

\section{Very stable and wobbly bundles}

\subsection{Twisted global nilpotent cone on $\mathbb{P}^1$}
As a warm-up exercise we first focus on $\mathbb{P}^1$. If $t \geq 0$ there does not exist any $t$-very stable bundle over $\mathbb{P}^1$. It is enough to check the statement for $r = 2$. Let $E\cong \mathcal{O}(d_1)\oplus \mathcal{O}(d_2)$. Then either $d_1 - d_2\geq 0$ or $d_2 - d_1 \geq 0$. Furthermore, without loss of any generality, we can take $d_1 - d_2 + t\geq 0$. Define 
\begin{equation}
\phi = \begin{bmatrix}
    0 & \phi'\\
    0 & 0
\end{bmatrix}.
\end{equation} Here, $\phi'$ is a nonzero section of a line bundle, appearing as a global component of $\phi$. This $\phi$ is a non-zero nilpotent Higgs field. 
\begin{proposition}
    Let $t < 0$; then, a bundle $E \cong \mathcal{O}(d_1)\oplus...\oplus \mathcal{O}(d_r)$ is $t$-very stable if and only if $$|d_i - d_j| < -t~~\text{for all}~ i, j.$$
\end{proposition}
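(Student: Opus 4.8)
The plan is to exploit the Birkhoff--Grothendieck splitting already in force, reducing the whole statement to a count of global sections of line bundles on $\mathbb{P}^1$. Writing $L \cong \mathcal{O}(t)$, I would first record
\[
\operatorname{End}(E) \otimes L \;\cong\; \bigoplus_{i,j} \mathcal{O}(d_i - d_j + t),
\]
so that any $\phi \in H^0(\operatorname{End}(E)\otimes L)$ is an $r\times r$ matrix whose $(i,j)$ entry $\phi_{ij}$ is a section of $\mathcal{O}(d_i - d_j + t)$. The only input needed is the elementary fact that $H^0(\mathbb{P}^1, \mathcal{O}(m)) \neq 0$ precisely when $m \geq 0$; hence $\phi_{ij}$ can be nonzero only if $d_i - d_j \geq -t$.

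To prove that the condition $|d_i - d_j| < -t$ (for all $i,j$) is sufficient for $t$-very stability, I would observe that it forces $d_i - d_j \leq |d_i - d_j| < -t$ for every ordered pair, so that $d_i - d_j + t < 0$ and every summand above has vanishing $H^0$. Thus $H^0(\operatorname{End}(E)\otimes L)$ vanishes outright, the only Higgs field of any kind is $0$, and $E$ is $t$-very stable. For the converse I would argue by contraposition: assume $|d_i - d_j| \geq -t$ for some pair. Since $t < 0$ gives $-t > 0$, this inequality is impossible when $i = j$, so necessarily $i \neq j$; relabelling so that $d_i - d_j = |d_i - d_j| \geq -t$, the line bundle $\mathcal{O}(d_i - d_j + t)$ carries a nonzero section $s$. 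Placing $s$ in the $(i,j)$ slot and zero elsewhere produces a nonzero $\phi$ whose unique nonzero entry is strictly off-diagonal, whence $\phi^2 = 0$. This exhibits a nonzero nilpotent Higgs field and shows $E$ is $t$-wobbly.

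The argument is short once the sign conventions are fixed, and I expect no genuine obstacle; the single point deserving care is that the hypothesis $t < 0$ is exactly what strips the diagonal entries (the potential source of a non-nilpotent, identity-like section) out of $H^0(\operatorname{End}(E)\otimes L)$, thereby guaranteeing that any nonzero section we produce lies off the diagonal and is automatically nilpotent. I would confirm the elementary-matrix nilpotency by the direct computation $(\phi^2)_{ab} = \sum_c \phi_{ac}\phi_{cb} = 0$, and check the boundary case $d_i - d_j + t = 0$, where the relevant section space is one-dimensional but still nonzero.
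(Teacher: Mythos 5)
Your argument is correct and is essentially the paper's own proof: both decompose $\operatorname{End}(E)\otimes\mathcal{O}(t)$ into the line bundles $\mathcal{O}(d_i - d_j + t)$, use the vanishing $H^0(\mathbb{P}^1,\mathcal{O}(m)) = 0$ for $m < 0$ to get sufficiency, and in the converse place a nonzero section of $\mathcal{O}(d_i - d_j + t)$ in a single off-diagonal slot to produce a nonzero nilpotent Higgs field. If anything, you are slightly more careful than the paper, which phrases the converse with a strict inequality $0 > t > d_j - d_i$ and thereby glosses over the boundary case $d_i - d_j + t = 0$ that you explicitly check (where $H^0(\mathcal{O}) \neq 0$ still yields a nonzero section).
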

\begin{proof}
Supposing $E$ satisfies the inequality from the statement, we necessarily have$$d_i - d_j \geq -|d_i - d_j| > t.$$That is, $d_i - d_j + t < 0$ for all $i, j$. As a consequence, $\phi\in H^0(\mbox{End}E\otimes \mathcal{O}(t))$ is zero and very stability of $E$ is confirmed. Let us consider the other direction now. In case there exists $i\neq j$ so that $0 > t > d_j - d_i$, then $d_i - d_j + t > 0$. We choose a section $s\in H^0(\mathcal{O}(d_1 - d_2 + t))$. Construct $\phi$ by inserting $s$ at the $(i, j)$-th (or the $(j, i)$-th) entry of a matrix, based on an ordered choice of a global basis; furthermore, place $0$ everywhere else so that $\phi$ is nilpotent. We also remark that the differences of any two Grothendieck numbers of a $t$-very stable bundle $E$ are bounded and a semistable bundle is $t$-very stable.
\end{proof}

\subsection{Stable versus very stable bundles}The following results are well known, but we outline their proofs.
\begin{lemma}\label{exact}
Let $E, E', E''$ be vector bundles on a curve $X$ satisfying an exact sequence $$0\to E'\to E\to E''\to 0$$ and $L$ be a line bundle on $X$ such that $H^0(\operatorname{Hom}(E'', E'\otimes L))\neq 0$. Then $E$ is not very stable. Conversely, if $E$ is an $L$-wobbly bundle of rank $2$ with a nonzero nilpotent element $\phi\in H^0(\mbox{End}E\otimes L)$ then there exists a line subbundle $E'$ such that $H^0(\operatorname{Hom}(E/E', E'\otimes L))\neq 0$.
\end{lemma}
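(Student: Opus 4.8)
The plan is to prove Lemma \ref{exact} in two directions, both hinging on the correspondence between nilpotent Higgs fields and maps of subquotients. For the forward direction, suppose $\psi \in H^0(\operatorname{Hom}(E'', E'\otimes L))$ is nonzero. I would build a nilpotent element $\phi \in H^0(\operatorname{End}E \otimes L)$ by composing the three natural maps furnished by the short exact sequence: the quotient $E \twoheadrightarrow E''$, the given map $\psi\colon E'' \to E'\otimes L$, and the inclusion $E'\otimes L \hookrightarrow E \otimes L$. Explicitly, set
\begin{equation}
\phi\colon E \twoheadrightarrow E'' \xrightarrow{\;\psi\;} E'\otimes L \hookrightarrow E\otimes L.
\end{equation}
This $\phi$ is a global section of $\operatorname{End}E \otimes L$, and it is nonzero because $\psi$ is nonzero and the quotient and inclusion are surjective and injective respectively. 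The key point is nilpotency: since $\phi$ factors through $E'\otimes L$ and then, after applying $\phi$ again, one lands in $E' \subseteq \ker(E \twoheadrightarrow E'')$ tensored appropriately, the composite $\phi^2$ annihilates everything. Concretely $\phi(E) \subseteq E'\otimes L$, and $E'$ maps to $0$ under the projection $E \to E''$, so $\phi^2 = 0$. Hence $E$ admits a nonzero nilpotent endomorphism twisted by $L$, i.e.\ $E$ is $L$-wobbly, which is what ``not very stable'' means.

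For the converse, assume $E$ has rank $2$ and is $L$-wobbly, witnessed by a nonzero nilpotent $\phi \in H^0(\operatorname{End}E \otimes L)$. Since $\phi$ is nilpotent of rank $2$, we have $\phi^2 = 0$, so the image sheaf $\operatorname{im}\phi$ is contained in $\ker\phi$. I would take $E'$ to be the line subbundle obtained by saturating $\operatorname{im}\phi$ inside $E$ (the saturation is needed to ensure the quotient $E/E'$ is locally free, hence a line bundle, since $E$ has rank $2$ and $\phi \neq 0$ forces $\operatorname{im}\phi$ to have rank exactly $1$). Then $\phi$ induces a nonzero map $E/E' \to E' \otimes L$: the containment $\operatorname{im}\phi \subseteq \ker\phi$ says $\phi$ kills $E'$, so $\phi$ descends to the quotient $E/E'$, and its image lands in $E'\otimes L$ by construction. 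This gives a nonzero element of $H^0(\operatorname{Hom}(E/E', E'\otimes L))$, as required.

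The main obstacle I anticipate is the sheaf-theoretic bookkeeping in the converse, specifically ensuring that $E'$ is genuinely a subbundle (line subbundle) and not merely a subsheaf, so that $E/E'$ is locally free and the Hom-sheaf statement is meaningful. Passing from $\operatorname{im}\phi$ to its saturation is what fixes this, but one must check that the induced map on the quotient remains nonzero after saturating rather than being forced to vanish; this is where the rank-$2$ hypothesis does real work, since it pins down all the ranks and leaves no room for $\phi$ to behave pathologically. The forward direction is essentially formal once the composite $\phi$ is written down, so I would keep that part brief and devote the care to the converse.
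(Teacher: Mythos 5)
Your proposal is correct and follows essentially the same route as the paper: the forward direction is the identical composite $E\twoheadrightarrow E''\xrightarrow{\;\psi\;}E'\otimes L\hookrightarrow E\otimes L$, and in the converse the paper simply takes $E'=\ker\phi$, which is automatically saturated (since $E/\ker\phi\cong\operatorname{im}\phi$ is torsion-free, hence locally free, on a smooth curve) and in rank $2$ coincides with the saturation you construct. One bookkeeping remark: since $\phi: E\to E\otimes L$, the image lives in $E\otimes L$, so the nilpotency containment reads $\operatorname{im}\phi\subseteq(\ker\phi)\otimes L$ and the subsheaf of $E$ you should saturate is $(\operatorname{im}\phi)\otimes L^{-1}$ rather than $\operatorname{im}\phi$ itself; with that twist inserted, your argument is exactly the paper's.
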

\begin{proof} Let $\psi$ denote a nonzero bundle morphism $E''\to E'\otimes L$. According to the following diagram we can choose $\phi = i\otimes 1\circ \psi\circ q$ which satisfies $\phi^2 = 0$.
\begin{equation}
\begin{tikzcd}
0 \arrow[r] & E'\arrow[r, "i"] & E\arrow[r, "q"] & E''\arrow[r]\arrow[dl, "\psi"] & 0\\
& 0 \arrow[r] & E'\otimes L \arrow[r, "i\otimes 1"] & E\otimes L \arrow[r] & E''\otimes L \arrow[r] & 0
\end{tikzcd}
\end{equation} 
Assume that $\phi:E\to E\otimes L$ such that $\phi\neq 0$ and $\phi^2 = 0$. Then we choose $E' = \ker(\phi)$ considering $\phi$ as a homomorphism of locally free sheaves. In a similar diagram as above taking $E'' = E/E' = \operatorname{Im}(\phi)$ we obtain a diagram where $\psi$ will be a well-defined bundle morphism $\operatorname{Im}(\phi)\to \ker(\phi)\otimes L$ due to $\phi^2 = 0$ and exactness of the following sequence of bundles\\ 
$$\begin{tikzcd}
0 \arrow[r] & \ker(\phi)\otimes L \arrow[r, "i\otimes 1"] & E\otimes L \arrow[r] & \operatorname{Im}(\phi)\otimes L \arrow[r] & 0.\end{tikzcd}$$ \end{proof}
\begin{lemma}\label{vst}
Let $X$ be a smooth complex curve.
\begin{enumerate}
\item If $E$ is semistable and $\deg(L) < 0$ then $E$ is $L$-very stable. \item\label{ss} Let $X$ be a curve with genus $g_X\geq 1$ and $L$ be a line bundle with $\deg(L)\geq 2(g_X - 1)$. An $L$-very stable bundle $E$ is slope-semistable. In particular, $g_X > 1$ implies $E$ is slope-stable.
\end{enumerate}
\end{lemma}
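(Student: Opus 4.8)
The plan is to treat the two parts separately. Both rest on slope inequalities coming from (semi)stability and some elementary sheaf theory; part (2) additionally invokes the wobbliness criterion of Lemma \ref{exact} together with a Riemann--Roch estimate.

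For part (1) I would in fact show the stronger statement $H^0(\operatorname{End}E\otimes L)=0$, which trivially forces the only nilpotent element to be $0$. Suppose $\phi\colon E\to E\otimes L$ is a nonzero homomorphism and set $I=\operatorname{Im}\phi$, a nonzero torsion-free sheaf. On the one hand $I$ is a torsion-free quotient of the semistable bundle $E$, so $\mu(I)\ge\mu(E)$; on the other hand $I$ is a subsheaf of $E\otimes L$, which is semistable of slope $\mu(E)+\deg L$ (tensoring by a line bundle merely shifts slopes uniformly), so $\mu(I)\le\mu(E)+\deg L$. Chaining these gives $\deg L\ge 0$, contradicting $\deg L<0$. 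Hence $\phi=0$ and $E$ is $L$-very stable.

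For part (2) I would prove the contrapositive: if $E$ fails to be semistable (respectively stable), then $E$ is $L$-wobbly. Assume first that $E$ is not semistable, and let $E'\subset E$ be the maximal destabilizing subbundle with quotient $E''=E/E'$; being saturated, $E'$ leaves $E''$ torsion-free, and $\mu(E')>\mu(E'')$. The aim is to produce a nonzero $\psi\in H^0(\operatorname{Hom}(E'',E'\otimes L))$, since Lemma \ref{exact} then manufactures a nonzero nilpotent $\phi=(i\otimes 1)\circ\psi\circ q$ and exhibits $E$ as wobbly. Writing $r',d'$ and $r'',d''$ for the ranks and degrees of $E',E''$ and $\ell=\deg L$, Riemann--Roch gives
\begin{equation}
\chi\big(\operatorname{Hom}(E'',E'\otimes L)\big)=\big(r''d'-r'd''\big)+r'r''(\ell+1-g_X).
\end{equation}
Since $\mu(E')>\mu(E'')$ we have $r''d'-r'd''\ge 1$, while the hypothesis $\ell\ge 2(g_X-1)$ yields $\ell+1-g_X\ge g_X-1\ge 0$; hence $\chi>0$, so $h^0>0$ and the desired $\psi$ exists.

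For the final assertion, suppose $g_X>1$ and that $E$ is semistable but not stable. Then there is a proper subbundle $E'\subset E$ with $\mu(E')=\mu(E)=\mu(E'')$, so the first term $r''d'-r'd''$ now vanishes; but $\ell+1-g_X\ge g_X-1\ge 1$ makes the second term $r'r''(\ell+1-g_X)\ge 1$ strictly positive, so again $\chi>0$, a nonzero $\psi$ exists, and $E$ is wobbly. The main delicacy is the bookkeeping that converts the slope inequality into the sign of $r''d'-r'd''$ and the verification that the threshold $\deg L\ge 2(g_X-1)$ is exactly what keeps $\ell+1-g_X$ nonnegative; the gap between the semistable and the stable conclusions is precisely the borderline case $g_X=1$, where the second term is permitted to vanish.
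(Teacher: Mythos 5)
Your proof is correct and follows essentially the same route as the paper's: part (1) is the same slope-chase under semistability (you run the inequality through $\operatorname{Im}\phi$ alone, where the paper uses $\ker\phi$ and the saturation of the image, deriving the same contradiction $\deg L\geq 0$), and part (2) is the same contrapositive argument producing $\psi\in H^0(\operatorname{Hom}(E'',E'\otimes L))$ via Riemann--Roch and feeding it to Lemma \ref{exact}. Your explicit bookkeeping $\chi = (r''d'-r'd'') + r'r''(\ell+1-g_X)$ with the integrality bound $r''d'-r'd''\geq 1$ is in fact a slightly cleaner write-up of the paper's displayed inequalities, and your handling of the borderline case $g_X=1$ matches the paper's conclusion exactly.
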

\begin{proof} 
To prove the first statement we claim that $H^0(\mbox{End}E\otimes L) = 0$. Choose, if possible, a nonzero $\phi\in H^0(\mbox{End}E\otimes L)$. As a morphism of sheaves $\phi$ can not be injective because we must respect the obvious inequality $\deg(E) > \deg(E\otimes L)$. So we apply the semistability condition on the subbundle $\ker(\phi)\subset E$ and on the saturated subbundle of $\mbox{Im}(\phi)$ contained in $E\otimes L$, to arrive at the contradiction that $\deg(L) \geq 0$.\\
To prove the latter statement we choose a slope-destabilizing subbundle $F$ of $E$, that is, $\mu(F) > \mu(E)$. We claim that $H^0(\operatorname{Hom}(E/F, F\otimes L)) \neq 0$ under this assumption. Observe that 
\begin{equation}
\mu(F) - \mu(E) > 0 \geq \left(1 - \frac{\mbox{rank}(F)}{\mbox{rank}(E)}\right)(g_X - 1 -\deg(L)). 
\end{equation}
Appealing to Riemann-Roch formula we obtain 
\begin{align*}
& h^0(\operatorname{Hom}(E/F, F\otimes L))\\
& = h^1(\operatorname{Hom}(E/F, F\otimes L)) + \mbox{rank}(F)\cdot (\mbox{rank}(E) - \mbox{rank}(F))(1 - g_X)\\
& + \deg((E/F)^*\otimes F\otimes L) > 0.
\end{align*}
According to the proof we see that under the assumption $g_X > 1$ we can replace slope-semistability with slope-stability. Here we have the following inequality 
\begin{equation}
\mu(F) - \mu(E) \geq 0 > \left(1 - \frac{\mbox{rank}(F)}{\mbox{rank}(E)}\right)(g_X - 1 -\deg(L)). 
\end{equation}
Even if $g_X = 1$ and $\deg(L) > 0$ we obtain stability of an $L$-very stable bundle.
\end{proof}

\subsection{Additivity and multiplicativity of vector bundles on an elliptic curve}
The crux of Atiyah's work is the exploration of indecomposable bundles and their multiplicative structures on elliptic curves. Tu organizes necessary information about the moduli of stable and semistable bundles out of Atiyah's construction of indecomposable bundles. Moreover, they computed the respective cohomology groups of such moduli space of bundles. We collect a list of useful results (with some changes in language) from both papers (\cite{Ati} and \cite{Tu}) in service to the readers.\\ 

A point $A$ on an elliptic curve $X$ gives rise to a unique line bundle $\mathcal{O}(A)\to X$ so that every nonzero section of $\mathcal{O}(A)$ vanishes at $A$. This is an isomorphism of complex manifolds (between the elliptic curve and its Picard variety of degree $1$), usually called the \textit{Abel-Jacobi map}. There is an isomorphism $\mbox{Pic}^1X\cong \mbox{Pic}^0X$ by $L\mapsto L\otimes \mathcal{O}(A)^{-1}$. Commonly the elements of a symmetric product of a curve are well defined divisors of the degree same as the power of the symmetric product. We will use the description of $\mbox{Sym}^h(X)$ as the collection of effective divisors interchangeably with the classical definition. More generally, there is a holomorphic Abel-Jacobi map $\alpha:\mbox{Sym}^h(X)\to \mbox{Pic}^hX$ assigning a divisor of $h$ points on $X$ to its corresponding line bundle of degree $h$. The tangent and the cotangent (canonical) bundles on $X$ are isomorphic to the trivial line bundle $X\times \mathbb{C}$.
\begin{enumerate}\label{list}
\item (\textit{The Uniqueness Theorem}) Let $E\in \mathcal{E}(r, d)$ with $d\geq 0$. Then (i) $\dim H^0(E) = h^0(E) = d$ when $d > 0$ and $0$ or $1$ when $d = 0$, (ii) if $d < r$, there is a trivial subbundle $I_s = \bigoplus_{i=1}^s\mathcal{O}_X$ of $E$ while $E' = E/I_s$ is indecomposable with $h^0(E') = s$.

\item (\textit{Existence Theorem}) Let $E'\in \mathcal{E}(r', d)$ with $d\geq 0$ and if $d = 0$ let $h^0(E') \neq 0$. Then there exists a bundle $E\in \mathcal{E}(r, d)$ unique up to isomorphism, given by an extension $$0\to I_s\to E\to E'\to 0$$ where $r = r' + s$ and $s = d$ when $d > 0$ and $1$ when $d = 0$.

\item (\textit{Extensions of indecomposable bundles of degree $0$}) There exists a vector bundle $F_r\in \mathcal{E}(r, 0)$ with $h^0(F_r)\neq 0$ that fits into an exact sequence $$0\to \mathcal{O}_X\to F_r\to F_{r-1}\to 0$$ and a bundle $E\in \mathcal{E}(r, 0)$ is of the form $E \cong F_r\otimes L$ for some $r$-torsion element $L\in \mbox{Pic}^0(X)$. The bundle $F_r$ is self dual i.e $F_r\cong F_r^*$ and $h^0(F_r\otimes L)\neq 0$ if and only if $h^0(L)\neq 0$.

\item (\textit{Indecomposable bundles with nonzero degrees}) Let $r, d$ be mutually prime integers. Then $E\in\mathcal{E}(r,d)$ is uniquely identified with its determinant $\det(E)\in \mbox{Pic}^d(X)$. Suppose that $A\in\mbox{Pic}^1(X)\cong X$ is a chosen base point. There is a unique bundle $E_A(r, d)\in \mathcal{E}(r,d)$ so that we are able to identify $E\in \mathcal{E}(r,d)$ as $E_A(r, d)\otimes L$. To be precise, $E_A(r, d)$ is the unique indecomposable bundle of rank $r$ and degree $d$ that admits determinant $\mathcal{O}(A)^d$. Moreover, $$E_A(r, d)^*\cong E_A(r, -d).$$ Even if $r$ and $d$ admit the greatest common divisor $h$, a bundle $E_A(r, d)$ is uniquely defined by a bijective correspondence between $\mathcal{E}(h, 0)$ and $\mathcal{E}(r, d)$. Related results are generalized with step by step reductions to the `mutually prime' cases.

\item (\textit{Multiplicative structures}) If $r, s > 1$ the bundle $F_r\otimes F_s$ decomposes as a finite direct sum of bundles $F_i$'s. If $r, d$ are mutually prime integers then $$E_A(r, d)\otimes F_h\cong E_A(rh, dh)$$ is indecomposable for any integer $h$. For chosen mutually prime pairs of integers $r, d$ and $r', d'$ so that $r, r'$ are mutually prime we have $$E_A(r, d) \otimes E_A(r', d') \cong E_A(rr', rd' + r'd).$$

\item (\textit{Semistability of indecomposable bundles}) For a chosen base point $A$ in $\mbox{Pic}^1X$, the canonical indecomposable bundle $E_A(r, d)$ is semistable for any $r, d$. If $(r, d) = h > 1$ and $r = r'h$ and $d = d'h$ then $E_A(r, d)$ is strictly semistable from the following exact sequence of bundles $$0\to E_A(r', d')\to E_A(r, d)\to E_A(r - r', d - d') \to 0.$$ This confirms stability of $E$ precisely if $r$ and $d$ are mutually prime. In particular, $F_r$ is semistable for all $r\in\mathbb{N}$.

\item (\textit{Decomposable semistable bundles}) Set $h = (r, d)$ and $r = r'h;~ d = d'h$. Every semistable bundle of rank $r$ and degree $d$ on $X$ is strongly equivalent to a bundle of the form $$E_A(r', d')\otimes\bigoplus_{i = 1}^h M_i,$$ where $M_i\in \mbox{Pic}^0X$ with $M_i^{n'} = \mathcal{O}_X$. There is an isomorphism $E_A(r', d')\otimes\oplus_{i = 1}^h M_i\mapsto \oplus_{i = 1}^h (M_i^{n'}\otimes A)$. 

\item (\textit{Semistable bundles and symmetric products of an elliptic curve}) Let the moduli space of (grading equivalent) semistable and the moduli space of (isomorphic) stable bundles of rank $r$ and degree $d$ on an elliptic curve $X$ be denoted with $\mathcal{M}_X^{ss}(r, d)$ and $\mathcal{M}_X^s(r, d)$ respectively. Suppose that $h$ is the greatest common divisor of $r$ and $d$. Then $$\mathcal{M}_X^{ss}(r, d)\cong \mbox{Sym}^h(X)$$ and $$\mathcal{M}_X^{s}(r, d)\cong \begin{cases}
    \emptyset;~ h > 1\\
    X;~ h = 1
\end{cases}.$$ 

\item\label{comm} There is a commutative diagram 
\begin{equation}
\begin{tikzcd}
\mathcal{M}_X^{ss}(r, d) \arrow[r, "\cong"] \arrow[d, "\det"]
& \mbox{Sym}^h(X) \arrow[d, "\alpha "] \\
\mbox{Pic}^dX \arrow[r, "\otimes \mathcal{O}(A)^{h - d}"]
& \mbox{Pic}^hX
\end{tikzcd}
\end{equation}
In short we explain the commutative diagram as following. Choose an element $E$ in $\mbox{Sym}^h(X)$ and represent it in two ways, first as a bundle $E_1\oplus...\oplus E_h$ each with the same rank and the same degree and take its determinant $L$. Then consider the same element $E$ as a divisor $p_1 +...+ p_h$ and consider the line bundle $\otimes_{i = 1}^h\mathcal{O}(p_i)$. These two line bundles differ by a tensor product of a power of $\mathcal{O}(A)$ so that $\det(E_i) = \mathcal{O}(A)^{\mbox{rank}(E_i)}$.
\item $\mbox{Pic}(\mathcal{M}_X^{ss}(r, d))\cong \mbox{Pic}(\mbox{Pic}(X))\oplus \mathbb{Z}$. Denote $\mathcal{M}_X^{ss}(r, L)$ the semistable bundles of a fixed determinant $L$. Then $\mathcal{M}_X^{ss}(r, L)\cong \mathbb{P}^{h - 1}$. We derive it from the fact that a fiber of the Abel-Jacobi map is the projectivization of the linear system of effective divisors of $L\otimes\mathcal{O}(A)^{h - d}$. 
\end{enumerate}

\section{Very stable and wobbly loci}
\subsection{Characterization of very stable and wobbly bundles}
We first consider $0\leq d < r$ and manage any other degree through adjusting via the division algorithm. We handle the indecomposable bundles at first then the case of the decomposable ones. 

\begin{remark}
We should be attentive to specific marginal cases. On an elliptic curve a trivially (or canonically) twisted very stable bundle is semistable but it does not necessarily define a point in the moduli space of semistable bundles and yields no information surrounding the very stable locus. Hence we should work out the case of decomposable (polystable) bundles to establish a few topological results of the very stable locus (or the wobbly locus) while handling separately the case of the indecomposable bundles. The same comment makes sense for any twist of degree $0$.
\end{remark}
\begin{enumerate}
\item Let $E\in \mathcal{E}(r, 0)$. For a twist $\deg(L) < 0$ we have $E$ is $L$-very stable due to the fact that $E$ is semistable. However a more elementary argument is available at our exposure, utilizing the fact that there exists an element $M\in \mbox{Pic}^0(X)$ so that $E\cong F_r\otimes M$. Decompose 
\begin{equation}
F_r\otimes F_r = \bigoplus\limits_{i = 1}^n F_{k_i} 
\end{equation}
in to indecomposable components and observe that $H^0(F_i\otimes L) = 0$ since $F_r\otimes L$ is indecomposable with a negative degree (cf. \cite{banerjee2023generalized} Lemma 3.19) we arrive at $H^0(\mbox{End}(E)\otimes L) = 0$. Hence $E$ is $L$-very stable.\qed\\
Next we choose a twist $L$ as $\deg(L) > 0$ or $L = \mathcal{O}_X$ we apply Lemma \ref{exact} on the exact sequence 
\begin{equation}
0\to M\to E\to F_{r-1}\otimes M\to 0 
\end{equation}
whereas $H^0(\mbox{Hom}(F_{r-1}\otimes M, M\otimes L)) = H^0(F_{r-1}\otimes L)\neq 0$. We conclude that $E$ is $L$-wobbly.\qed

\item Now we consider $0 < d < r$. For $\deg(L) < 0$ we have $E\in \mathcal{E}(r, d)$ is $L$-very stable once again as $E$ is semistable (Lemma \ref{exact}).\qed\\
Choose a twist $L$ with $\deg(L) > 0$. We focus on a line bundle $L$ so that $\deg(L)\geq 2$. We derive that $E$ is $L$-wobbly. Choose an exact sequence 
\begin{equation}
0\to \mathcal{O}_X\to E\to E'\to 0. 
\end{equation}
We remark that $E'$ may decompose in to bundles of smaller ranks. The inequality $0 < d < r + (r - 2)$ implies $\deg(L) \geq 2 > \frac{d}{r-1}$. We obtain 
\begin{equation}
\deg(E'^*\otimes L) = (r - 1)\deg(L) - d > 0. 
\end{equation}
By $\ref{exact}$ we have $E$ is $L$-wobbly via Riemann-Roch.\qed\\
Let $\deg(L) = 1$. We choose the rank and the degree of $E$ so that $r > d + 1$. The exactly same reasoning $E$ is $L$-wobbly. We sharpen our reasoning to investigate the $L$-very stable pairs in the case of $r = d + 1$. At first we focus at $d = 1$. Suppose that $E$ is a rank $2$ bundle of degree $1$ with determinant $\delta\in \mbox{Pic}^1(X)$. Due to its stability each line subbundle $\xi\subset E$ admits a degree $\leq 0$. In particular, there is an exact sequence of bundles 
\begin{equation}
0\to \xi\to E\to \xi^{-1}\otimes\delta \to 0.
\end{equation}
If $E$ admits a line subbundle $\xi$ such that $\xi^2\otimes L = \delta$ then $E$ is $L$-wobbly. If such a line subbundle does not exist then $E$ is $L$-very stable due to \ref{exact}. In general we use the exact sequence 
\begin{equation}
0\to I_d\to E\to E'\to 0 
\end{equation}
where $E'$ is a degree $1$ line bundle isomorphic to the determinant of $E$. If $E' \cong L$ we can conclude $E$ is $L$-wobbly which confirms that the $L$-wobbly locus in $\mathcal{E}(d + 1, d)$ is nonempty. We frame a relevant conjecture \ref{con} in this context. We further choose the twist $L = \mathcal{O}_X = T^*X$. If $(r, d) = 1$ then $E\in \mathcal{E}(r, d)$ is stable, thus simple and obviously very stable. However, if $(r, d) > 1$ a bundle $E$ is not necessarily stable and $H^0(\mbox{End}E)$ may or may not admit a nonzero nilpotent Higgs field. We handle such bundles up to a grading equivalent polystable bundle in Remark \ref{dec}.\qed
\begin{conjecture}\label{con}
Consider the holomorphic one-to-one correspondence $$\det:\mathcal{E}(d+1, d)\cong X\to X\cong \operatorname{Pic}^1(X).$$ The set of $L$-very stable locus is an open dense subset $\det^{-1}(\operatorname{Pic}^1(X)\backslash\{L\}) = X\backslash\det^{-1}(L)$ for $d\geq 2$.
\end{conjecture}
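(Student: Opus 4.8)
The plan is to begin by fixing the identifications hidden in the statement. Since $d+1$ and $d$ are coprime, every $E\in\mathcal{E}(d+1,d)$ is stable (item \ref{comm}'s neighbours in the list) and is determined by $\det E\in\operatorname{Pic}^d(X)$, so $\det$ is the bijection $\mathcal{E}(d+1,d)\cong\operatorname{Pic}^d(X)\cong X$; composing with the tensoring isomorphism $\operatorname{Pic}^d(X)\xrightarrow{\otimes\mathcal{O}(A)^{1-d}}\operatorname{Pic}^1(X)$ from the commutative diagram (item \ref{comm}) transports the distinguished twist $L\in\operatorname{Pic}^1(X)$ to a single bundle, so that $\det^{-1}(L)$ is one point of $X$ and $X\setminus\det^{-1}(L)$ is its complement. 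By Lemma \ref{vst}(\ref{ss}), with $g_X=1$ and $\deg L=1>0$, any $L$-very stable bundle is already stable, so no bundles outside $\mathcal{E}(d+1,d)$ intervene and the entire question lives on this copy of $X$; the final openness-and-density claim will then be immediate, since the complement of a point under the holomorphic map $\det$ is open and dense.

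Next I would set up the working criterion. Extending the rank-two statement of Lemma \ref{exact}, a stable $E$ is $L$-wobbly if and only if it carries a proper subbundle $F\subsetneq E$ with $H^0(\operatorname{Hom}(E/F,F\otimes L))\neq 0$: the forward direction is the construction $\phi=(i\otimes 1)\circ\psi\circ q$, and the converse takes $F=\ker\phi$ after replacing a nonzero nilpotent $\phi$ by its last nonvanishing power so that $\operatorname{Im}\phi\subseteq F\otimes L$. For a subbundle of rank $k$ and degree $e$ a direct computation gives $\deg\operatorname{Hom}(E/F,F\otimes L)=(d+1)e-k(k-1)$. On an elliptic curve Riemann--Roch forces $h^0(V)\geq\deg V$, so any subbundle with $(d+1)e>k(k-1)$ already makes $E$ wobbly, while the borderline case $(d+1)e=k(k-1)$ requires the degree-zero bundle $\operatorname{Hom}(E/F,F\otimes L)$ to contain a copy of $\mathcal{O}_X$.

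I would then try to enumerate the relevant subbundles via Atiyah's structure theory. The canonical sequence $0\to I_d\to E\to\det E\to 0$ from the Uniqueness Theorem supplies the rank-$d$ trivial subbundle, for which the twisted homomorphism bundle has degree $-d(d-1)<0$ when $d\geq 2$ and contributes nothing; stability bounds line subbundles $\xi\subset E$ by $\deg\xi\leq 0$, so these give $\deg\operatorname{Hom}(E/\xi,\xi\otimes L)=(d+1)\deg\xi\leq 0$, with the borderline occurring only at $\deg\xi=0$. The hoped-for mechanism is that, tracking $\det E$ through Atiyah's multiplicative structure and the Abel--Jacobi description of $\operatorname{Sym}^h(X)$, the borderline nonvanishing collapses to the single linear condition $\det E\cong L\otimes\mathcal{O}(A)^{d-1}$, i.e. the one excluded point, exactly as in the rank-two computation $\xi^2\otimes L=\det E$ underlying part (2) of the main theorem.

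The step I expect to be the main obstacle is the control of the \emph{intermediate and corank-one} subbundles, and here a preliminary computation already signals where the real content sits. For any $\lambda\in\operatorname{Pic}^1(X)$ the bundle $E^{*}\otimes\lambda$ has rank $d+1$ and positive degree $1$, so $H^0(E^{*}\otimes\lambda)\neq 0$ produces a nonzero map $E\to\lambda$, and stability forces it to be surjective; its kernel is a rank-$d$ subbundle $F$ of degree $d-1$, whence $\deg\operatorname{Hom}(\lambda,F\otimes L)=d-1>0$ and this twisted homomorphism space is nonzero. The decisive task is therefore to understand whether such abundant high-degree subbundles genuinely yield nonzero nilpotents or are somehow inert, and to reconcile their behaviour with the conjectured genericity of very stability; on an elliptic curve the Segre invariants are as small as the genus permits (the degree-zero sub-line-bundles already sweep out all of $\operatorname{Pic}^0(X)$), so this reconciliation — rather than any of the formal steps above — is where the argument must either be completed or forced to refine the statement.
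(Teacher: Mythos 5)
You should first be aware that the paper offers no proof of this statement to compare against: it is framed precisely as Conjecture \ref{con} and left open. Judged on its own, your proposal does not prove the conjecture --- and in fact your own final ``preliminary computation,'' pushed one step further, \emph{disproves} it. For any $\lambda\in\operatorname{Pic}^1(X)$ you correctly produce a surjection $E\to\lambda$ (indeed $E^*\otimes\lambda\in\mathcal{E}(d+1,1)$ has $h^0=1$ by Atiyah, and stability forces the resulting nonzero map to be surjective), whose kernel $F$ has rank $d$ and degree $d-1$, so that $\operatorname{Hom}(E/F,F\otimes L)=F\otimes L\otimes\lambda^{-1}$ has degree $d-1>0$ for $d\geq 2$ and hence, by Riemann--Roch at genus one, a nonzero section $\psi$. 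At this point there is no room for the subbundle to be ``inert'': the forward direction of Lemma \ref{exact} --- the only direction needed, and the one the paper proves by the explicit formula $\phi=(i\otimes 1)\circ\psi\circ q$ --- yields $\phi\neq 0$ (since $q$ is surjective, $\psi\neq 0$, and $i\otimes 1$ is injective) with $\phi^2=0$ (since $q\circ i=0$). Hence \emph{every} $E\in\mathcal{E}(d+1,d)$ is $L$-wobbly for every degree-one twist $L$ once $d\geq 2$: the $L$-very stable locus is empty, not $X\backslash\det^{-1}(L)$, so the conjectured dichotomy is false and the central step of your plan --- collapsing the borderline cases to the single condition $\det E\cong L\otimes\mathcal{O}(A)^{d-1}$ --- has nothing left to establish. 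Your own (correct) formula $\deg\operatorname{Hom}(E/F,F\otimes L)=(d+1)e-k(k-1)$ explains the structure: on the corank-one subbundles this degree is $0$ at $d=1$ but strictly positive for $d\geq 2$, which is exactly why the mechanism of part (3) of Theorem \ref{maint} (twists of degree $\geq 2$) persists for $\deg L=1$ in rank $\geq 3$, and why extrapolating the rank-two statement of part (2) to higher rank was never going to work.

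Two secondary points. First, the converse of your single-subbundle criterion is flawed in rank $>2$: if $\phi^k\neq 0=\phi^{k+1}$ with $k\geq 2$, then $\phi^k$ is a section of $\operatorname{End}E\otimes L^k$, a different twist, and $F=\ker\phi$ does not satisfy $\operatorname{Im}\phi\subseteq F\otimes L$; the correct general reformulation requires the flag $\ker\phi\subset\ker\phi^2\subset\cdots$ and nonzero maps between successive graded pieces. This does not affect the disproof, which uses only the forward direction, but your stated equivalence mischaracterizes wobbliness in general. Second, the abundance phenomenon you uncovered deserves scrutiny even at $d=1$, where your write-up uncritically inherits the paper's expectation: every $\xi\in\operatorname{Pic}^0(X)$ embeds as a subbundle of every stable $E\in\mathcal{E}(2,1)$ (as $h^0(E\otimes\xi^{-1})=1$ and stability forces the saturation to have degree zero), and $\delta\otimes L^{-1}$ always admits square roots in $\operatorname{Pic}^0(X)$, so the paper's own criterion $\xi^2\otimes L=\delta$ is satisfiable for \emph{every} determinant $\delta$. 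The constructive takeaway is that your Riemann--Roch computation, promoted to a lemma, settles the $d\geq 2$ case of Conjecture \ref{con} in the negative rather than leaving a gap to be reconciled.
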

\end{enumerate}

\begin{remark}
we are yet to settle very stability of bundles for a nontrivial twist $L\in \mbox{Pic}^0X$. For $r\geq 2$ we focus on two particular cases: either $L$ is an $r$-torsion element in $\mbox{Pic}^0(X)$ with order $r$ or not an $r$-torsion element. Suppose that $L\in \mbox{Pic}^0X$ admits order $r$ and $E$ is a bundle of rank $r$ and $\phi\in H^0(\mbox{End}E\otimes L)$ admitting a nonzero determinant $\det(\phi)\in \mathbb{C}$. It is easy to derive that $$\phi:E\to E\otimes L$$ is an isomorphism of bundles. Assuming moreover that $E$ is stable we conclude that $E$ is $L$-very stable. On the other hand $E$ is very stable if and only if $H^0(\mbox{End}E\otimes L)$ is trivial, given that $\det(\phi) = 0$ for any $\phi$ (since $\phi\in H^0(\mbox{End}E\otimes L)$ is nilpotent).\\

Choose a twist $L$ so that its order is greater than $r$. This is an extension of the previous case. Once again, a bundle $E$ of rank $r$ is very stable if and only if $H^0(\mbox{End}E\otimes L)$ is trivial. For example, $F_r\in \mathcal{E}(r, 0)$ is $L$-very stable.\qed
\end{remark}

\begin{remark}\label{dec}
We can briefly talk about decomposable $L$-very stable bundles on an elliptic curve $X$. To respect the stability property, we discard all the twists $L$ with positive degrees (Lemma \ref{vst}). Assume $\deg(L) \leq 0$ and $E \cong \bigoplus_{i = 1}^n E_i$ decomposes in to indecomposable summands. If $E$ is $L$-very stable then $$\deg(E_i^*\otimes E_j\otimes L) \leq 0$$ precisely if $$|\mu(E_i) - \mu(E_j)|\leq -\deg(L)$$ holds for all $i, j$. In particular, if $\deg(L) = 0$ we moreover impose that $E_i$'s are stable, each with slope $\mu$. Then the bundle $E$ is $L$-wobbly if and only if for some $i\neq j$, $$E_i\cong E_j\otimes L.$$ This assertion is true due to the following well known fact: Let $E_1, E_2$ be stable bundles of same slope. Then $H^0(\mbox{Hom}(E_1, E_2)) \neq 0$ if and only if $E_1\cong E_2$. For example, $\bigoplus_{i = 1}^n E$ is $\mathcal{O}_X$-wobbly for any stable bundle $E\to X$. On this note we conclude the characterization of the twisted very stable and wobbly bundles on an elliptic curve $X$.\qed
\end{remark}

We summarize the whole discussion in form of the following theorem.
\begin{theorem}\label{maint}
Let $X$ be a complex elliptic curve.
\begin{enumerate}
\item Any bundle $E\in \mathcal{E}(r, 0)$ is canonically wobbly. A stable bundle $E\in\mathcal{E}(r, d)$, if exists, is canonically very stable. A polystable bundle $E \cong \bigoplus_{i = 1}^n E_i$ (summands of equal slopes) is canonically very stable if and only if $E_i\ncong E_j$ for all $i\neq j$. 
\item Let $L\to X$ be a twist of degree $1$. A bundle $E\in \mathcal{E}(2, 1)$ is $L$-very stable if and only if $\det(E)\neq L$.
\item Let $L\to X$ be a twist with a degree $\geq 2$. A stable bundle $E\in\mathcal{E}(r, d)$ is $L$-wobbly.
\end{enumerate}
\end{theorem}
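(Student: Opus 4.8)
The plan is to assemble the three parts from the case-by-case characterization already carried out, with Lemma~\ref{exact} as the common engine: it converts $L$-wobbliness into the non-vanishing of $H^0(\operatorname{Hom}(\text{quotient},\text{sub}\otimes L))$ for a suitable short exact sequence, and conversely produces such a sequence from a nilpotent Higgs field. I would feed this criterion the Atiyah--Tu structure theory (the bundles $F_r$ and $E_A(r,d)$, their semistability, the self-duality of $F_r$, and the $h^0$ formulas of the Uniqueness Theorem) together with Riemann--Roch, which on an elliptic curve collapses to $\chi(V)=\deg(V)$ so that any positive-degree bundle has $h^0>0$. Throughout, the canonical twist is the trivial one, since $K_X\cong\mathcal{O}_X$.

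For part (1) I would treat the three subcases separately. If $E\in\mathcal{E}(r,0)$, I write $E\cong F_r\otimes M$ with $M\in\operatorname{Pic}^0(X)$ and tensor the defining sequence $0\to\mathcal{O}_X\to F_r\to F_{r-1}\to 0$ by $M$; then $\operatorname{Hom}(F_{r-1}\otimes M,\,M)\cong F_{r-1}^*\cong F_{r-1}$ has sections, so Lemma~\ref{exact} makes $E$ wobbly. If $E$ is stable it is simple, whence $H^0(\operatorname{End}E)=\mathbb{C}\cdot\mathrm{id}$ contains no nonzero nilpotent and $E$ is very stable. If $E\cong\bigoplus_{i=1}^n E_i$ is polystable with equal slopes, I decompose $H^0(\operatorname{End}E)=\bigoplus_{i,j}H^0(\operatorname{Hom}(E_i,E_j))$ and invoke the dichotomy that, for stable summands of one slope, $\operatorname{Hom}(E_i,E_j)\neq 0$ exactly when $E_i\cong E_j$; a repeated summand then supplies an explicit off-diagonal nilpotent, while pairwise non-isomorphic summands force a diagonal scalar algebra with no nonzero nilpotent.

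For part (3) I would first reduce to $0<d<r$ by tensoring with a line bundle $M$, which leaves $L$-very-stability unchanged, and then use the canonical sequence $0\to\mathcal{O}_X\to E\to E'\to 0$ from the Existence/Uniqueness Theorem. The key numerical point is $\deg(E'^*\otimes L)=(r-1)\deg(L)-d$, which is positive once $\deg(L)\geq 2$ because $d<r\leq 2(r-1)$; Riemann--Roch then gives $H^0(\operatorname{Hom}(E',\mathcal{O}_X\otimes L))=H^0(E'^*\otimes L)\neq 0$, and Lemma~\ref{exact} yields wobbliness. For part (2), with $E\in\mathcal{E}(2,1)$ stable and determinant $\delta:=\det(E)\in\operatorname{Pic}^1(X)$, I use the rank-$2$ converse of Lemma~\ref{exact}: wobbliness is equivalent to the existence of a line subbundle $\xi$ with $H^0(\operatorname{Hom}(E/\xi,\xi\otimes L))\neq 0$. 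Stability bounds $\deg\xi\leq 0$, and since $\operatorname{Hom}(E/\xi,\xi\otimes L)\cong\xi^2\otimes\delta^{-1}\otimes L$ has degree $2\deg\xi$, a nonzero section on the elliptic curve forces $\deg\xi=0$ and $\xi^2\otimes L\cong\delta$. I would then isolate the distinguished subbundle $\mathcal{O}_X\hookrightarrow E$ coming from $0\to\mathcal{O}_X\to E\to\delta\to 0$ and match the condition $\xi^2\otimes L\cong\delta$ against the determinant, reading off $\delta\cong L$ as the wobbly locus.

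The main obstacle is exactly this last matching in part (2): the criterion $\xi^2\otimes L\cong\delta$ must be controlled against the supply of degree-$0$ line subbundles of $E$, so the crux is to argue that the determinant condition $\det E\cong L$ is the precise dividing line, i.e. that the distinguished subbundle governs wobbliness and that no competing degree-$0$ subbundle produces a nilpotent when $\det E\not\cong L$. A secondary but genuine check is in part (1): verifying that the off-diagonal endomorphism built from a repeated stable summand is simultaneously nonzero and square-zero, and that the ``$\operatorname{Hom}$ is nonzero iff isomorphic'' dichotomy is applied only to summands of the common slope.
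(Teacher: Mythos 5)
For parts (1) and (3) your route is exactly the paper's own. For (1), the paper tensors $0\to\mathcal{O}_X\to F_r\to F_{r-1}\to 0$ by $M$ and uses $H^0(\operatorname{Hom}(F_{r-1}\otimes M, M\otimes L))\cong H^0(F_{r-1}\otimes L)\neq 0$ via the self-duality of $F_{r-1}$, as you do; stability implies simplicity for the very stable claim; and the polystable dichotomy is precisely the paper's Remark \ref{dec}, where your off-diagonal map built from a repeated summand is the intended nilpotent (it is nonzero, and its square vanishes because its image summand lies in its kernel — and you are right that the Hom-dichotomy is only invoked for stable summands of the common slope). For (3), the paper likewise works with $0<d<r$ and the sequence $0\to\mathcal{O}_X\to E\to E'\to 0$, with $\deg(E'^{*}\otimes L)=(r-1)\deg(L)-d>0$ for $\deg(L)\geq 2$, followed by Riemann--Roch and Lemma \ref{exact}; your reduction and inequality agree with the paper's.

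The obstacle you flag in part (2) is a genuine gap, and in fact it cannot be closed along the route you propose: the asserted dividing line fails, by the very tools in play. Every $\xi\in\operatorname{Pic}^0(X)$ occurs as a line subbundle of a stable $E\in\mathcal{E}(2,1)$: indeed $\xi^{-1}\otimes E$ is indecomposable of rank $2$ and degree $1$, so $h^0(\xi^{-1}\otimes E)=1$ by the Uniqueness Theorem, and the saturation of the image of a nonzero map $\xi\to E$ has degree $\leq 0$ by stability, hence degree exactly $0$, so $\xi$ is already a subbundle. Since squaring is surjective on $\operatorname{Pic}^0(X)$, there always exists $\xi$ with $\xi^2\cong\delta\otimes L^{-1}$; then $\operatorname{Hom}(E/\xi,\xi\otimes L)\cong\xi^2\otimes\delta^{-1}\otimes L\cong\mathcal{O}_X$ has a nonzero section and the forward direction of Lemma \ref{exact} produces a nonzero nilpotent. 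So every $E\in\mathcal{E}(2,1)$ is $L$-wobbly for every degree-$1$ twist, regardless of whether $\det E\cong L$, and the distinguished subbundle $\mathcal{O}_X\subset E$ does not govern wobbliness. A cross-check: $\operatorname{End}E\otimes L\cong L\oplus\bigoplus_{i=1}^{3}(L\otimes\xi_i)$ with $\xi_i$ the nontrivial $2$-torsion line bundles; the traceless sections form a $\mathbb{C}^3$ and nilpotency is the condition $\det\phi=0$ in $H^0(L^2)\cong\mathbb{C}^2$, i.e. the intersection of two conics in $\mathbb{P}^2$, which is always nonempty (four points, matching the four square roots of $\delta\otimes L^{-1}$). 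You should also be aware that the paper itself never supplies the missing direction: its discussion proves only that $\det E\cong L$ implies wobbly (nonemptiness of the wobbly locus) and explicitly defers the rest to Conjecture \ref{con}; moreover the abstract's claim that stable bundles are not very stable for \emph{any} positive twist is exactly what the computation above supports, against Theorem \ref{maint}(2) as stated. So your instinct to isolate this matching step as the crux was correct — it is not a step that can be repaired, but the point where the statement itself breaks down.
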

\begin{corollary} Let $X$ be a complex elliptic curve.
\begin{enumerate}
\item A semistable canonically wobbly bundle is not stable. 
\item Let $L\to X$ be a twist of degree $1$. The wobbly locus $\mathcal{W}(2, 1, L)$ is a point on $X$.
\item Let $L\to X$ be a twist of degree $\geq 2$. Then $\mathcal{W}(r, d, L)\cong \mathcal{M}_X^{ss}(r, d)$.
\end{enumerate}
\end{corollary}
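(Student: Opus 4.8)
The plan is to read off all three parts directly from Theorem \ref{maint} and Lemma \ref{vst}, keeping in mind that on an elliptic curve the canonical bundle is trivial, so the canonical twist and the $\mathcal{O}_X$-twist coincide.

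For part (1) I would simply contrapose Theorem \ref{maint}(1). That statement guarantees that a stable bundle $E\in\mathcal{E}(r,d)$ is canonically very stable; equivalently, no stable bundle is canonically wobbly. Consequently a bundle that is both semistable and canonically wobbly cannot be stable, which is precisely the claim.

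For part (2) the task is to transport the bundle-level criterion to the moduli space. By Theorem \ref{maint}(2), a bundle $E\in\mathcal{E}(2,1)$ is $L$-wobbly exactly when $\det(E)=L$. Since $(2,1)=1$, item (8) of the list recalled earlier gives $\mathcal{M}_X^{ss}(2,1)\cong\mbox{Sym}^1(X)=X$, and under the commutative square (\ref{comm}) the determinant corresponds to the Abel--Jacobi map $\alpha:\mbox{Sym}^1(X)\to\mbox{Pic}^1(X)$, which is an isomorphism when $h=1$ (the lower horizontal map being $\otimes\mathcal{O}(A)^{0}=\mbox{id}$). Hence the locus $\{\det(E)=L\}$ is a single fiber, and I would conclude that $\mathcal{W}(2,1,L)$ consists of one point of $X\cong\mbox{Pic}^1(X)$.

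For part (3) I want the equality $\mathcal{W}(r,d,L)=\mathcal{M}_X^{ss}(r,d)$, equivalently that the $L$-very stable locus is empty once $\deg(L)\geq 2$. Theorem \ref{maint}(3) already shows every stable bundle is $L$-wobbly, so the substance, and the main obstacle, lies in the strictly semistable locus, since a point of $\mathcal{M}_X^{ss}(r,d)$ is recorded only by its polystable representative. To handle this uniformly I would invoke Lemma \ref{vst}(2): as $g_X=1$ and $\deg(L)>0$, any $L$-very stable bundle is forced to be stable. Thus a hypothetical $L$-very stable point would be stable by Lemma \ref{vst}(2) and simultaneously $L$-wobbly by Theorem \ref{maint}(3), a contradiction; so no point of the moduli space is $L$-very stable and $\mathcal{W}(r,d,L)=\mathcal{M}_X^{ss}(r,d)$. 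The only point requiring care is that Lemma \ref{vst}(2) is a statement about an arbitrary bundle of the given rank and degree, so it indeed applies to the polystable representatives and not merely to the stable bundles.
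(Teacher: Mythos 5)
Your proposal is correct and follows exactly the route the paper intends: the corollary is stated without proof as an immediate consequence of Theorem \ref{maint}, combined with the identification $\mathcal{M}_X^{ss}(2,1)\cong X$ via the determinant/Abel--Jacobi square for part (2) and the stability of $L$-very stable bundles when $g_X=1$, $\deg(L)>0$ (the closing remark in the proof of Lemma \ref{vst}) for part (3). Your explicit attention to the strictly semistable (polystable) representatives in part (3) is precisely the point that makes the deduction complete, so there is nothing to add.
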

\begin{remark}
Let $E$ be an indecomposable bundle of degree $0$. Its grading is a direct sum of line bundles which are isomorphic to each other, thus the grading bundle is wobbly too.
\end{remark}
\subsection{Topology of the canonically very stable locus and the wobbly locus on curves}
We dedicate this subsection to a rapid recollection of the results concerning the canonically wobbly loci on curves of higher genera. We furthermore discuss the merits and demerits of these results in case of an elliptic curve. The geometry of the canonically very stable and the canonically wobbly locus of bundles was formally introduced by Laumon \cite{Gera} on the algebraic stack of rank $n$ bundles $\mbox{Fib}_{X, n}$, on a complex algebraic curve $X$. Laumon proved that the nilpotent cone in the cotangent bundle $T^*\mbox{Fib}_{X, n}$ is the support of a closed reduced Lagrangian submanifold of $T^*\mbox{Fib}_{X, n}$ of dimension $n^2(g_X - 1)$ and later concluded that very stable bundles form an open dense subset in $\mbox{Fib}_{X, n}$. The very stable bundles live inside the open dense subset of slope-semistable bundles for $g_X > 0$ (cf. \cite{Gera} Proposition (3.5)) by Lemma \ref{exact} and Lemma \ref{vst}. In the same article \cite{Gera} Laumon referred to a result of Drinfeld stating that the collection of the wobbly bundles form a pure closed subset in $\mbox{Fib}_{X, n}$ of codimension $1$. Recently mathematicians (\cite{Pal}, \cite{Palpau}, \cite{Peon}) investigated in to topological properties of the moduli space of the very stable and the wobbly bundles within the moduli space of Higgs bundles on curves $X$ of genus $\geq 2$. At this stage, we briefly recollect features of the moduli space of semistable $L$-twisted pairs for a chosen line bundle $L$.\newline\newline
For the rest of the article, we restrict our focus to $L = K_X$ or $\deg(L) > 2g_X - 2$ on a curve $X$ of genus at least $1$. Nitsure (\cite{Nitin}) proved existence of a quasi-projective separated noetherian scheme $\mathcal{M}_X(r, d, L)$ of finite type over $\mathbb{C}$ parametrizing the collection of strongly equivalent semistable $L$-twisted pairs which contains an open subscheme $\mathcal{M}_X'(r, d, L)$ parametrizing the collection of isomorphism classes of stable pairs. In particular, there is a smooth open subscheme $\mathcal{M}_0'$ of complex dimension $\dim_{\mathbb{C}} \mathcal{M}_0' = r^2\deg(L) + 1 + h^1(X, L)$ parametrizing the stable pairs which admits a stable underlying bundle. This scheme $\mathcal{M}_0'$ is integral. In particular, $\mathcal{M}_0'(K_X)$ is the cotangent bundle $T^*\mathcal{M}_X^{s}(r, d)$ that admits hyperk\"ahler structures. For a general twist the space of stable pairs $\mathcal{M}_0'$ admits a K\"ahler structure. There is a proper morphism $H: \mathcal{M}_X(r, d, L)\to \oplus_{i = 1}^r H^0(X, L^i)$ evaluating the characteristic coefficients of pairs. In \cite{Peon}, Pauly and Pe\'on-Nieto proved a chain of important criteria as a device to locate the very stable bundles inside $T^*\mathcal{M}_X^{s}(r, d)$ on a curve of genus $\geq 2$. The following result is a version of \cite{Peon} Theorem 1.1 with minimal changes in the language and the notation.

\begin{theorem}\label{verys}
Let $E$ be a stable bundle of rank $r$ and degree $d$ and $V_E$ be the complex vector space $H^0(\mbox{End}E\otimes K_X)$. Then the following statements are equivalent.
\begin{enumerate}
\item $E$ is very stable.
\item $V_E$ is closed in $\mathcal{M}_X(r, d, K_X)$.
\item The restriction of the Hitchin morphism $H$ on $V_E$ is proper.
\item $H_{V_E}$ is quasi-finite.
\end{enumerate}
\end{theorem}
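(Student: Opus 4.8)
The plan is to realize $V_E$ as a $\mathbb{C}^\times$-invariant subvariety of $\mathcal{M}_X(r,d,K_X)$ and to reduce every one of the four statements to a single fact about the nilpotent cone sitting inside $V_E$, exploiting the equivariance and the properness of the Hitchin morphism $H$. First I would observe that, since $E$ is stable, every pair $(E,\phi)$ with $\phi\in V_E$ is a stable $K_X$-twisted pair, so the assignment $\iota\colon\phi\mapsto[(E,\phi)]$ is an injective morphism $V_E\to\mathcal{M}_X(r,d,K_X)$; because $E$ is simple, $\mbox{Aut}(E)=\mathbb{C}^\times$ acts trivially by conjugation, so $\iota$ is injective, and under the identification $\mathcal{M}_0'(K_X)=T^*\mathcal{M}_X^{s}(r,d)$ recorded above, $V_E$ is exactly the cotangent fibre over $[E]$, hence a closed linear subvariety of the open subscheme $\mathcal{M}_0'$ and therefore a locally closed immersion into $\mathcal{M}_X(r,d,K_X)$. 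Both $V_E$ and the Hitchin base $B=\bigoplus_{i=1}^r H^0(X,K_X^i)$ carry linear $\mathbb{C}^\times$-actions --- scaling $\phi\mapsto\lambda\phi$ on $V_E$ and weight $i$ on the $i$-th summand of $B$ --- and $H\circ\iota$ is equivariant and homogeneous for these weights, all of which are strictly positive.

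The central observation is that $(H\circ\iota)^{-1}(0)$ is precisely the nilpotent cone in $V_E$: a Higgs field $\phi$ maps to $0\in B$ if and only if all the characteristic coefficients of $\phi$ vanish, that is, if and only if $\phi$ is nilpotent. Hence $E$ is very stable exactly when this central fibre is the single fixed point $\{0\}$. I would then invoke the graded finiteness criterion for a $\mathbb{C}^\times$-equivariant morphism of affine spaces with strictly positive weights on the contracting target: such a morphism is finite if and only if its central fibre is the origin. This gives $(1)\Rightarrow$ finiteness of $H_{V_E}$, and hence $(1)\Rightarrow(3)$ and $(1)\Rightarrow(4)$, since finite morphisms are proper and quasi-finite.

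The remaining implications are formal. For $(3)\Rightarrow(4)$, a proper morphism out of the affine variety $V_E$ has fibres that are simultaneously complete and affine, hence finite. For $(4)\Rightarrow(1)$, quasi-finiteness forces the central fibre --- being a scaling-invariant cone --- to be finite, hence equal to $\{0\}$, which is very stability. For the closedness statement, $(3)\Rightarrow(2)$ follows from the cancellation lemma: $H$ is separated and $H\circ\iota=H_{V_E}$ is proper, so $\iota$ is proper, and a proper locally closed immersion is a closed immersion; conversely $(2)\Rightarrow(3)$ because the restriction of the proper morphism $H$ to the closed subvariety $V_E$ is again proper. This closes the cycle of equivalences among all four conditions.

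The main obstacle is the finiteness criterion used in the second paragraph, namely the passage from ``the nilpotent cone in $V_E$ is a single point'' to global finiteness, equivalently properness, of $H_{V_E}$. This is exactly where the semiprojective structure of $\mathcal{M}_X(r,d,K_X)$ is essential: the $\mathbb{C}^\times$-action contracts every point to the nilpotent cone as $\lambda\to 0$, and it is the properness of the full Hitchin map $H$ together with homogeneity that upgrades the set-theoretic statement about the central fibre into a scheme-theoretic finiteness statement. I would also take care to verify that $\iota$ is a genuine locally closed immersion, so that the cancellation argument delivers a closed immersion rather than merely a proper map onto its image.
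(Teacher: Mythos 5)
Your argument is correct, but it takes a genuinely different route from the proof the paper sketches, which is Pauly--Pe\'on-Nieto's original one \cite{Peon}: there, the hard implication from very stability to closedness/properness is established by contradiction, resolving the indeterminacy locus of a rational map from a surface into $\mathcal{M}_X(r,d,K_X)$ to produce a connected chain of projective lines in $\overline{V_E}$ joining $(E,0)$ to a putative boundary point $(F,\psi)\in\overline{V_E}\setminus V_E$, and forcing $F\cong E$ by semistability; semiprojectivity of the moduli space and the dimension count $\dim_{\mathbb{C}}V_E = \dim_{\mathbb{C}}\mathcal{B} = 1 + r^2(g_X-1)$ then extract quasi-finiteness from properness. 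You instead reduce everything to quasi-homogeneity of $H_{V_E}$, whose components are the characteristic coefficients, homogeneous of degrees $1,\dots,r$, and your graded finiteness criterion is sound: if the common zero locus of the pulled-back coordinates is $\{0\}$, the Nullstellensatz makes the quotient of $\mathbb{C}[V_E]$ by the ideal they generate finite dimensional, and graded Nakayama (equivalently a graded Noether normalization, or even an elementary compactness estimate on the unit sphere of $V_E$) yields finiteness of $H_{V_E}$, hence properness and quasi-finiteness simultaneously. Your formal implications also check out: the cancellation argument is legitimate because Nitsure's moduli space is separated \cite{Nitin}, so a proper locally closed immersion $\iota$ has closed image, giving $(3)\Rightarrow(2)$; your complete-plus-affine fibre argument gives $(3)\Rightarrow(4)$ without the equal-dimension hypothesis the paper invokes; and scaling-invariance of the nilpotent cone gives $(4)\Rightarrow(1)$. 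In spirit your route is closer to Hausel--Hitchin's treatment of upward flows under the $\mathbb{C}^{\times}$-action \cite{Hausel2021VerySH} than to \cite{Peon}: it buys a stronger conclusion (finiteness of $H_{V_E}$, from which (3) and (4) both fall out at once) with much less machinery, whereas the degeneration argument buys concrete geometric information about the limiting pairs $(F,\psi)$ that witness the failure of closedness for wobbly bundles. The one point you rightly flag and should make fully explicit is that $\iota$ is a locally closed immersion rather than merely injective; this is exactly the paper's observation that $V_E = \pi^{-1}([E])$ is closed in the open subscheme $\mathcal{M}_0'\cong T^*\mathcal{M}_X^{s}(r,d)$, so no gap remains.
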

The proof of the above statement engages deeply with the topology of the limit points coming from the usual $\mathbb{C}^*$-action on the moduli space $\mathcal{M}_X(r, d, K_X)$. We recall, as a standard fact, that this moduli space is semiprojective i.e. the limit $\lim_{\lambda\to 0} (E, \lambda\cdot\phi)$ which exists in the projective completion of the moduli space lies inside the moduli space itself. To prove this specific assertion, they choose the path of showing a contradiction. They modify a rational map from a complex surface to $\mathcal{M}_X(r, d, K_X)$ by resolving its indeterminacy locus and construct a morphism from a connected union of projective lines to the Zariski closure of $V$. This morphism assumes values $(E, 0)$ and $(F, \psi)\in \bar{V}\backslash V$ at two distinct points and leads to the conclusion that $F = E$ to respect semistability of $F$. A favorable estimate of the dimensions of affine spaces plays an important role in this argument. Recall that a proper morphism between two affine schemes of the same dimension, in particular, the Hitchin base $\mathcal{B}$ and the vector space $H^0(\mbox{End}E\otimes K)$ of the same complex dimension $1 + r^2(g_X - 1)$, is quasi-finite. We can interpret the criterion in a slightly different way. Recall that there is a holomorphic projection morphism $\pi:\mathcal{M}_0'\to \mathcal{M}_X^s(r, d)$ and $\pi^{-1}([E])$ is the vector space $V$ which is closed in $\mathcal{M}_0'$ but may not be closed inside $\mathcal{M}_X(r, d, L)$. Theorem \ref{verys} confirms that on a curve of genus $> 1$, a fiber $\pi^{-1}([E])$ is closed if and only if $E$ is canonically very stable.\\

The nilpotent cone $\mathcal{N}$ is the collection of semistable nilpotent Higgs bundles. It is formally defined as the fiber $H^{-1}(0)$. Due to properness $\mathcal{N}$ is a compact subset (possibly containing singular points) and it is not easy to explore the generally twisted nilpotent cone. For a stable bundle $E\to X$ we have $[(E, 0)]\in H^{-1}(0)$, thus the stable bundles contribute to the nilpotent cone a dimensional estimate at least $r^2(g_X - 1) + 1$. Geometrically, a stable point $(E, 0)$ lies at the bottom of the nilpoent cone and appears as a global minima of the Morse function of stable pairs $(E, \phi)\mapsto 2i\int_X\mbox{trace}(\phi\phi^*)$. To study the geometry of the nilpotent cone, Mathematicians (\cite{Hausel}, \cite{Palpau}) restricted their investigation to the determinant morphism on trace-free canonically twisted pairs of rank $2$ with a fixed determinant $\Lambda$. The main advantage here is that the very stable locus lives inside the `zero-determinant locus' of the stable Higgs bundles as an open subset and simplifies the analysis. The definition that is due to \cite{Palpau}. It futher leads to the fact that the wobbly locus is a reducible divisor.

\begin{definition}\label{def2}
Let $g_X\geq 2$ and $\Lambda$ be a fixed line bundle on the curve $X$. We denote the moduli space of the stable traceless Higgs bundles of rank $2$ with a fixed determinant $\Lambda$ by $\mathcal{M}_X'(2, \Lambda, K_X)$. The determinant morphism $h:\mathcal{M}_X'(2, \Lambda, K_X) \to H^0(K^2)$ is proper and surjective and we mention $h^{-1}(0)$ as the nilpotent cone $\mathcal{N}_h$. The nilpotent cone decomposes as $\mathcal{M}_X^s(2, \Lambda)\cup \mathcal{N}_0$. We identify $\mathcal{M}_X^s(2, \Lambda)$ with the collection of pairs $[(E, 0)]$ and $\mathcal{N}_0$ consists of all nonzero nilpotent stable pairs $[(E, \phi)]$. We observe that $\mathcal{M}_X^s(2, \Lambda)$ is an open subset by the dimensional estimates (\cite{Hausel}) and the image of $\mathcal{N}_0$ under the forgetful rational map $\mathcal{N}_h\dashrightarrow \mathcal{M}_X^s(2, \Lambda)$ is the \textit{wobbly locus}.
\end{definition}
Works of Narasimhan and Ramanan \cite{NR1}, \cite{NR2} laid the foundation of research of stable bundles on complex curves, particularly in the base case of rank $2$. In case of $g_X = 2$, $\mathcal{M}_X^s(2, \delta)$ is isomophic to the intersection of a smooth pencil of quadrics in $\mathbb{CP}^5$. As a known fact we mention that any non-trivial extension of the following type (this exact sequence is mentioned in Lemma \ref{exact}) 
\begin{equation}
0\to \xi^{-1}\to E\to \xi\otimes\delta\to 0
\end{equation}
where $\xi$ is a line bundle of degree $0$, is stable and any two such extensions are isomorphic if and only if these are scalar multiples of each other. Moreover, there is a linear embedding $\mathbb{P}(H^1(\xi^{-2}\otimes \delta^{-1}))\to \mathcal{M}_X^s(2, \delta)$. Pal further restricted the wobbly locus inside the cotangent bundle $T^*\mathcal{M}_X^s(2, \delta)$ and proved (cf. \cite{Pal} Theorem 1.3). that the space of wobbly vector bundles of rank $2$ with determinant $\delta$ is isomorphic to a surface of degree $32$. Recently Pal has completed his proof for an arbitrary rank of the fact that wobbly locus forms a closed subvariety of codimension $1$ inside $\mathcal{M}_X^s(r, \delta)$ generalizing his techniques of extensions of the very stable bundles from the rank $2$ case to an arbitrary rank (cf. \cite{PalG} Theorem 1.1). Pal proved that
a stable bundle $E\in \mathcal{M}_X^s(r, \delta)$ is wobbly if and only if it passes through a nonfree minimal rational curve.\\

The work of Pal and Pauly (\cite{Palpau}) is significant for their explicit computation of the Chern classes of the irreducible subdivisors of the wobbly divisor. Assuming the definition \ref{def2}, we present a theorem of Pal and Pauly (cf. \cite{Palpau} Theorem 1.1) with minimal changes in the language and the notation. We denote $\lambda =\deg(\Lambda)$. Unfortunately, we do not find any major information about the topology of the irreducible components of the wobbly locus of an arbitrary rank on a curve of genus $\geq 2$.
\begin{theorem}
The wobbly locus $\mathcal{W}\subset \mathcal{M}_X^s(2, \Lambda)$ is of pure dimension $1$ and we have the following decomposition for $\lambda = 0$ and $\lambda = 1$, $$\mathcal{W} = \mathcal{W}_{\lceil\frac{g_X - \lambda}{2}\rceil} \cup...\cup \mathcal{W}_{g_X - \lambda}.$$ In particular, all loci $\mathcal{W}_k$ appearing in the above decomposition are divisors. They are all irreducible, except $\mathcal{W}_{g_X}$ for $\lambda = 0$, which is the union of $2^{2g_X}$ irreducible divisors.
\end{theorem}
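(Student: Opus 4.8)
The plan is to reduce the statement to a stratification of the wobbly locus by the degree of the kernel of a nilpotent Higgs field, and then to realise each stratum as the image of an explicit projective bundle over a Brill--Noether locus inside a symmetric product. First I would invoke Lemma~\ref{exact} in the rank-$2$, fixed-determinant setting: a stable $E\in\mathcal{M}_X^s(2,\Lambda)$ is canonically wobbly if and only if it admits a sub-line bundle $M\hookrightarrow E$ with $H^0(M^{2}\otimes\Lambda^{-1}\otimes K_X)\neq 0$, the nilpotent field being the composite $E\twoheadrightarrow \Lambda\otimes M^{-1}\to M\otimes K_X\hookrightarrow E\otimes K_X$ with $M=\ker\phi$. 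Setting $m=\deg M$ and $d=\deg(M^{2}\otimes\Lambda^{-1}\otimes K_X)=2m-\lambda+2g_X-2$, stability forces $m<\lambda/2$ while a nonzero section forces $d\geq 0$; reindexing by $k=1-m$ turns admissibility into the crude bound $k\le g_X-\lambda$, and the dimension count below will sharpen the lower end to $\lceil (g_X-\lambda)/2\rceil$.

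Next I would build, for each admissible $m$, a parameter space. The line bundles $M$ that can occur are precisely those whose image $M^{2}\otimes\Lambda^{-1}\otimes K_X$ lies in the effective locus $W_d\subset \operatorname{Pic}^d(X)$; equivalently $M$ ranges over the preimage $\mathcal{S}_m$ of $W_d$ under the \'etale $2^{2g_X}$-cover $\operatorname{Pic}^m(X)\to\operatorname{Pic}^d(X)$, $M\mapsto M^{2}\otimes\Lambda^{-1}\otimes K_X$. Over $\mathcal{S}_m$ I would form the projective bundle with fibre $\mathbb{P}\operatorname{Ext}^1(\Lambda\otimes M^{-1},M)=\mathbb{P}H^1(M^{2}\otimes\Lambda^{-1})$, and use the classical linear embedding $\mathbb{P}(H^1(\xi^{-2}\otimes\delta^{-1}))\to\mathcal{M}_X^s(2,\delta)$ recalled above to obtain a morphism $\nu_m$ into $\mathcal{M}_X^s(2,\Lambda)$, whose image I would call $\mathcal{W}_k$.

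The heart of the argument is the dimension count. Since $2m-\lambda<0$ we have $h^0(M^{2}\otimes\Lambda^{-1})=0$, so Riemann--Roch gives $\dim\mathbb{P}\operatorname{Ext}^1(\Lambda\otimes M^{-1},M)=g_X-2-2m+\lambda$, while $\dim\mathcal{S}_m=\dim W_d=\min(d,g_X)$. Writing $f$ for the dimension of the generic fibre of $\nu_m$ --- that is, the number of moduli of admissible destabilising sub-line bundles carried by a generic member of the stratum --- the expected value is $f=\max(0,d-g_X)$, whence
\[ \dim\mathcal{W}_k=\min(d,g_X)+\bigl(g_X-2-2m+\lambda\bigr)-\max(0,d-g_X). \]
For $d\le g_X$ this collapses to $3g_X-4=\dim\mathcal{M}_X^s(2,\Lambda)-1$, so $\mathcal{W}_k$ is a divisor, whereas for $d>g_X$ it is strictly smaller. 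Thus the codimension-one strata are exactly those with $0\le d\le g_X$, i.e. with $\lceil(g_X-\lambda)/2\rceil\le k\le g_X-\lambda$; the sub-maximal strata, being lower-dimensional, must then be shown to lie in the closures of the listed divisors, so that $\mathcal{W}$ is pure of codimension one (equivalently, each listed $\mathcal{W}_k$ is a divisor).

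Finally I would settle irreducibility via monodromy. The variety $\mathcal{S}_m$ is the preimage of the irreducible $W_d$ under an \'etale $2^{2g_X}$-cover, and is itself irreducible for $d\ge 1$ because the monodromy, computed through the Abel--Jacobi map $\operatorname{Sym}^d X\to\operatorname{Pic}^d(X)$, acts transitively on the $2^{2g_X}$ sheets (the induced map on $\pi_1$ surjects onto $(\mathbb{Z}/2)^{2g_X}$); hence $\mathcal{W}_k$ is irreducible. When $\lambda=1$ the degree $d=2m+2g_X-3$ is odd and therefore at least $1$ on every admissible stratum, so all components are irreducible. When $\lambda=0$ the top stratum $k=g_X$ has $d=0$: here $W_0$ is a point and $\mathcal{S}_m$ degenerates to the finite set of $2^{2g_X}$ square roots of $\Lambda\otimes K_X^{-1}$ (the theta characteristics when $\Lambda=\mathcal{O}_X$), each contributing a separate irreducible divisor and accounting for the exceptional $\mathcal{W}_{g_X}$. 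The hard part will be the fibre estimate $f=\max(0,d-g_X)$: it requires controlling, through Segre-invariant and Brill--Noether arguments, how many admissible sub-line bundles a generic member of each stratum possesses, together with verifying that the generic extension $E$ is genuinely stable and carries $M$ as its essentially unique maximal admissible sub-line bundle. Establishing this estimate and the absorption of the sub-maximal strata into the closures is where the real work lies; the dimension bookkeeping and the monodromy computation are then routine.
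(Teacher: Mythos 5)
You should first be aware that the paper does not prove this statement at all: it is quoted, with notational adjustments, as Theorem 1.1 of Pal--Pauly \cite{Palpau}, and the paper only records the definition of the subloci $\mathcal{W}_k^0$ (sub-line bundles $L\subset E$ of degree $1-k$ with $H^0(K_XL^2\Lambda^{-1})\neq 0$), the containment $\bigcup_{k<\lceil(g_X-\lambda)/2\rceil}\mathcal{W}_k\subset \mathcal{W}_{\lceil(g_X-\lambda)/2\rceil}$ from \cite{Palpau} Proposition 2.3, and, in the discussion of the Chern class computation, the loci $Z_k\subset \operatorname{Pic}^{1-k}(X)$ obtained as preimages of the Brill--Noether loci $W_{2g_X-2k-\lambda}$ under $M\mapsto K_XM^2\Lambda^{-1}$. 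Measured against that, your skeleton is a faithful reconstruction of the cited argument: your $\mathcal{S}_m$ is exactly the $Z_k$ of \cite{Palpau}, your extension-space parametrization, the arithmetic $d=2m-\lambda+2g_X-2$, the identification of the divisorial range $\lceil(g_X-\lambda)/2\rceil\leq k\leq g_X-\lambda$, and the monodromy argument through $\pi_1(\operatorname{Sym}^dX)\twoheadrightarrow(\mathbb{Z}/2)^{2g_X}$ for irreducibility when $d\geq 1$ --- including isolating the exceptional case $\lambda=0$, $k=g_X$, $d=0$ with its $2^{2g_X}$ components --- are all correct and in line with the source.

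However, as a proof there are genuine gaps, and they sit at exactly the two points that carry the theorem. First, the fibre estimate $f=\max(0,d-g_X)$ is asserted rather than established; it amounts to generic finiteness of $\nu_m$ on the strata with $d\leq g_X$, and you correctly flag that this needs Segre-invariant and Brill--Noether genericity, but without it the key equality $\dim\mathcal{W}_k=3g_X-4$ is only an upper-bound heuristic. Relatedly, $\nu_m$ is in general only a rational map: for $m$ sufficiently negative a nontrivial extension of $\Lambda\otimes M^{-1}$ by $M$ can admit intermediate destabilizing sub-line bundles, so the Narasimhan--Ramanan statement recalled in the paper (stated there for $\deg\xi=0$) does not extend verbatim, and you must first show stable extensions form a dense open subset of each $\mathbb{P}H^1(M^2\Lambda^{-1})$. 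Second, the absorption of the sub-maximal strata cannot come from dimension bookkeeping: knowing that a stratum with $d>g_X$ has dimension $<3g_X-4$ does not place it inside the closure of any listed divisor --- a priori it could be a separate irreducible component of $\mathcal{W}$ of codimension $\geq 2$, which is precisely what purity forbids, so purity is the conclusion here and not an available tool; the containment $\bigcup_{k<\lceil(g_X-\lambda)/2\rceil}\mathcal{W}_k\subset\mathcal{W}_{\lceil(g_X-\lambda)/2\rceil}$ is proved in \cite{Palpau} by an explicit geometric argument on the sub-line bundles, and your proof is incomplete until you supply its analogue. You should also justify that $\mathcal{W}$ is closed (wobbliness is closed under specialization, via properness or semiprojectivity as in \cite{Gera}, \cite{Peon}), since otherwise writing $\mathcal{W}$ as a union of the Zariski closures $\mathcal{W}_k$ could overshoot. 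Finally, a small point: the quoted statement's ``pure dimension $1$'' should read pure \emph{codimension} $1$; your computation $3g_X-4=\dim\mathcal{M}_X^s(2,\Lambda)-1$ proves the codimension form.
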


We explain the divisors $\mathcal{W}_k$ briefly. A sublocus is defined $$\mathcal{W}_k^0 := \{E\in \mathcal{W}: L\subset E;~ \deg(L) = 1 - k~ \text{with}~ H^0(K_XL^2\Lambda^{-1})\neq 0\}$$ and denote by $\mathcal{W}_k$ the Zariski closure of $\mathcal{W}_k^0\subset \mathcal{M}_X^s(2, \Lambda)$. It is deduced that $\mathcal{W} = \cup_{k = 1}^g \mathcal{W}_k$ for $\lambda = 0$ and $\mathcal{W} = \cup_{k = 1}^{g - 1} \mathcal{W}_k$ for $\lambda = 1$. There is a filtration of $\mathcal{W}_k$'s as per \cite{Palpau} Proposition 2.3 that $\bigcup_{k= 1}^{\lceil{\frac{g_X - \lambda}{2}}\rceil -1}\mathcal{W}_k\subset \mathcal{W}_{\frac{g_X - \lambda}{2}}$. Further each of the divisors $\mathcal{W}_k$ represents an element in the Picard group of $\mathcal{M}_X^s(2, \Lambda)$. According to a result by Drezet and Narasimhan we are aware that the Picard group of $\mathcal{M}_X^s(2, \Lambda)$ is isomorphic to $\mathbb{Z}$. The following theorem is Theorem 1.3 \cite{Palpau} which mentions the representatives in the Picard group in terms of the first Chern classes.

\begin{theorem}
    We have the following equality for $\lambda = 0$ and $\lambda = 1$ $$\emph{cl}(\mathcal{W}_k) = 2^{2k}{g_X \choose 2g_X - 2k -\lambda}~ \text{for}~ \lceil\frac{g_X - \lambda}{2}\rceil\leq k\leq g_X - \lambda.$$
\end{theorem}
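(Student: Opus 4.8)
The plan is to present $\mathcal{W}_k$ as the image of a projective bundle over a Brill--Noether locus in a Picard variety, and then to read off the integer $\operatorname{cl}(\mathcal{W}_k)/\Theta$, where $\Theta$ generates $\operatorname{Pic}(\mathcal{M}_X^s(2,\Lambda))\cong\mathbb{Z}$. First I would record the extension description behind the definition of $\mathcal{W}_k^0$: every such $E$ sits in a nonsplit sequence $0\to L\to E\to \Lambda L^{-1}\to 0$ with $\deg L=1-k$, and the wobbly (nilpotent Higgs) condition of Lemma \ref{exact} is exactly $H^0(\operatorname{Hom}(\Lambda L^{-1},L\otimes K_X))=H^0(K_X L^2\Lambda^{-1})\neq 0$. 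Writing $d=\deg(K_X L^2\Lambda^{-1})=2g_X-2k-\lambda$, this condition carves out the locus $Z_k=\mu^{-1}(W_d)\subset\operatorname{Pic}^{1-k}(X)$, where $\mu\colon L\mapsto K_X L^2\Lambda^{-1}$ and $W_d\subset\operatorname{Pic}^d(X)$ is the effective (Brill--Noether) locus. Over a dense open part of $Z_k$ the spaces $\operatorname{Ext}^1(\Lambda L^{-1},L)=H^1(L^2\Lambda^{-1})$ fit into a projective bundle $p\colon\mathbb{P}\to Z_k$, and sending an extension class to the isomorphism class of $E$ defines a morphism $f\colon\mathbb{P}\to\mathcal{M}_X^s(2,\Lambda)$ with image $\mathcal{W}_k$. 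A Riemann--Roch count gives $\dim Z_k=d$ and $\dim\mathbb{P}=3g_X-4=\dim\mathcal{W}_k$ precisely in the range $\lambda\le d\le g_X$, i.e.\ $\lceil (g_X-\lambda)/2\rceil\le k\le g_X-\lambda$ (the parity $d\equiv\lambda$ explaining the endpoint $d=\lambda$); here I would verify that a generic $E\in\mathcal{W}_k$ carries a unique such sub-line bundle, so that $f$ is generically injective.

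The second step is the class of the base. Up to translation $\mu$ is the multiplication-by-two isogeny on the Jacobian, so it acts on cohomology by $\mu^{*}\theta=4\theta$, where $\theta$ is the principal polarization. Poincar\'e's formula $[W_d]=\theta^{\,g_X-d}/(g_X-d)!$ then yields
$$[Z_k]=\mu^{*}[W_d]=4^{\,g_X-d}\,\frac{\theta^{\,g_X-d}}{(g_X-d)!}\in H^{2(g_X-d)}(\operatorname{Pic}^{1-k}(X),\mathbb{Z}).$$
This is the sole source of the factor $4^{\,g_X-d}$, and the binomial coefficient will appear through $\int_{\operatorname{Jac}}\theta^{g_X}=g_X!$, since intersecting $[Z_k]$ with $\theta^{d}$ produces $4^{\,g_X-d}\binom{g_X}{d}$.

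The third step transports the computation to $\mathcal{M}_X^s(2,\Lambda)$. I would fix the ample generator $\Theta$ via the Dr\'ezet--Narasimhan description of $\operatorname{Pic}\cong\mathbb{Z}$ as a determinant-of-cohomology line bundle, treating $\lambda=0$ and $\lambda=1$ separately because the generator's normalization differs with the parity of $\lambda$. Using the projective-bundle decomposition $\operatorname{Pic}(\mathbb{P})=\mathbb{Z}\xi\oplus p^{*}\operatorname{Pic}(Z_k)$ with $\xi$ the relative hyperplane class, I would compute $f^{*}\Theta=a\,\xi+p^{*}\beta$ by evaluating $\Theta$ on a Hecke test curve inside a fibre (giving the multiple $a$ of $\mathcal{O}(1)$ to which $\Theta$ restricts) and on a curve in the base (giving $\beta$, which I expect to be a fixed multiple of $2\theta$ together with an overall $2^{-\lambda}$ normalization in the odd case). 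Because $f$ is generically injective onto the divisor $\mathcal{W}_k$, the projection formula turns $\operatorname{cl}(\mathcal{W}_k)=m_k\Theta$ into an intersection number on $\mathbb{P}$: pushing $(f^{*}\Theta)^{3g_X-4}$ down along $p$ via the Grothendieck relation and integrating against $[Z_k]$, the factors assemble as $4^{\,g_X-d}\cdot 2^{\,d}\cdot 2^{-\lambda}\binom{g_X}{d}=2^{\,2g_X-d-\lambda}\binom{g_X}{d}=2^{2k}\binom{g_X}{2g_X-2k-\lambda}$.

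The main obstacle is precisely this third step: determining the constants $a$ and $\beta$, rather than merely the shape of $f^{*}\Theta$. This demands (i) an explicit determinant-of-cohomology computation for the universal extension over $\mathbb{P}$, (ii) genuinely separate handling of $\lambda=0$ and $\lambda=1$, where the parity of $d$ and the normalization of the theta generator produce the $2^{-\lambda}$ discrepancy, and (iii) control of the generic degree of $f$ --- in particular, at $\lambda=0$, $k=g_X$, $d=0$ the $2^{2g_X}$ square roots $L$ with $L^2=K_X^{-1}\Lambda$ make $Z_{g_X}$ a set of $2^{2g_X}$ points and split $\mathcal{W}_{g_X}$ into $2^{2g_X}$ theta divisors each of class $\Theta$, consistently summing to $2^{2g_X}\Theta$. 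Once these intersection constants are pinned down, the remaining verification is routine bookkeeping with Poincar\'e's formula.
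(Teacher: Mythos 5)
Your first two steps are sound and coincide with the argument this theorem is quoted from (Pal--Pauly \cite{Palpau}, as sketched in the paper): the extension description of $\mathcal{W}_k^0$ via Lemma \ref{exact}, the identification $Z_k=\mu^{-1}(W_d)\subset\operatorname{Pic}^{1-k}(X)$ with $d=2g_X-2k-\lambda$, and the class $[Z_k]=4^{\,g_X-d}\,\theta^{g_X-d}/(g_X-d)!$ obtained from $\mu^{*}\theta=4\theta$ plus Poincar\'e's formula reproduce exactly the cited Lemma 4.1 (for $\lambda=1$ this is the paper's $\frac{2^{4k-2g_X+2}}{(2k-g_X+1)!}\Theta^{2k-g_X+1}$), and your endpoint check at $k=g_X$, $\lambda=0$ is correct. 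The genuine gap is your third step, and you concede it yourself: the constants $a$ and $\beta$, the $2^{-\lambda}$ normalization, and the generic degree of $f$ are never computed --- they are chosen so that ``the factors assemble'' into $2^{2k}\binom{g_X}{d}$, which is to assume the theorem at the only step where its content is produced. The actual proof executes this step by a different mechanism: for an \emph{arbitrary} classifying map $f\colon S\to\mathcal{M}_X^s(2,\Lambda)$ one identifies $f^{-1}(\mathcal{W}_k)$ as $\pi_S(\Delta_k\cap(S\times Z_k))$ inside $S\times\operatorname{Pic}^{1-k}(X)$ and computes $[\Delta_k]=\frac{2^{2g_X-2k-2}}{(2g_X-2k-1)!}\,\Theta_S\otimes\Theta_{\operatorname{Pic}^{1-k}(X)}^{2g_X-2k-1}$ by a determinant-of-cohomology (Grothendieck--Riemann--Roch on $S\times X$) calculation with the universal bundle; cupping with $[Z_k]$ and integrating $\theta^{g_X}\mapsto g_X!$ over the Picard variety yields $2^{2k}\binom{g_X}{2g_X-2k-\lambda}\,\Theta_S$ in one stroke. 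Every power of $2$ you postulate in step three is an output of that computation, not an input.

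There is also a structural defect in your route even if the constants were supplied. Writing $\operatorname{cl}(\mathcal{W}_k)=m_k\Theta$, your plan evaluates $\int_{\mathbb{P}}(f^{*}\Theta)^{3g_X-4}=\deg(f)\cdot m_k\cdot\int_{\mathcal{M}}\Theta^{3g_X-3}$, so extracting $m_k$ requires the top self-intersection number $\int_{\mathcal{M}}\Theta^{3g_X-3}$ --- a Verlinde/Thaddeus-type quantity you never mention, and one that is genuinely problematic for $\lambda=0$, where the moduli space is singular and $\mathcal{W}_k$ meets the strictly semistable locus. The classifying-map computation is linear in $\Theta_S$, so $m_k$ is read off from $[f^{-1}(\mathcal{W}_k)]=m_k\,f^{*}\Theta$ without any top intersection number; this is precisely what your pushforward approach cannot sidestep. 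Finally, generic injectivity of $f$ (uniqueness of the line subbundle witnessing wobbliness for generic $E\in\mathcal{W}_k$) is not a routine verification: the inclusions $\bigcup_{k<\lceil(g_X-\lambda)/2\rceil}\mathcal{W}_k\subset\mathcal{W}_{\lceil\frac{g_X-\lambda}{2}\rceil}$ recorded in the paper show that these subloci collapse into one another across strata, so multiplicities of the parametrization demand the same careful bookkeeping the cited proof performs, rather than a one-line genericity claim.
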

The proof is technical. We mention a few important steps. The actual strategy follows from decomposing $\mathcal{N}_0$ as a union of images of $\mathcal{W}_k$'s under the aforementioned forgetful rational map. A set of subvarieties $Z_k\subset \mbox{Pic}^{1 - k}(X)$ for $1\leq k \leq g_X - \lambda$ appear as the pre-images of the Brill-Noether Loci $W_{2g_X - 2k - \lambda}$ under the holomorphic map $L(\in\mbox{Pic}^{1 - k}(X)) \mapsto K_XL^2\Lambda^{-1}(\in\mbox{Pic}^{2g_X - 2k - \lambda}(X))$. The fundamental class $[Z_k]$ of $Z_k$ as a subvariety of $\mbox{Pic}^{1 - k}(X)$ is computed as $\frac{2^{4k - 2g_X + 2}}{(2k - g_X + 1)!}\cdot \Theta_{\mbox{Pic}^{1 - k}(X)}^{2k - g_X + 1}$ (\cite{Palpau} Lemma 4.1) wherein we use $\Theta_{\mbox{Pic}^{1 - k}(X)}$ to denote a theta divisor of $\mbox{Pic}^{1 - k}(X)$. Under a classifying map $f:S\to \mathcal{M}_X^s(2, \Lambda)$ the wobbly subloci $\mathcal{W}_k$ is identified as $f^{-1}(\mathcal{W}_k) = \pi_S(\Delta_k\cup (S \times Z_k))$ where $\pi_S: S\times X\to S$ is the projection map. Here $\Delta_k$ is a subvariety of $S\times \mbox{Pic}^{1 - k}(X)$ (cf. page 9 \cite{Palpau}). The fundamental class $[\Delta_k]$ of $\Delta_k$ is computed as $\frac{2^{2g_X - 2k - 2}}{(2g_X - 2k - 1)!}\Theta_S\otimes \Theta_{\mbox{Pic}^{1 - k}(X)}^{2g_X - 2k -1}$ given that $\Theta_S$ is the first Chern class of the pullback of an ample generator of $\mbox{Pic}(\mathcal{M}_X^s(2, \Lambda))$ under $f$. Finally, the fundamental class of $f^{-1}(\mathcal{W}_k)$ is the tensor product of $[Z_k]$ with $[\Delta_k]$.

\subsection{Topology of the wobbly locus on an elliptic curve}

The work of Franco G\'omez \cite{Franco} initiates the study of the moduli space of $G$-Higgs bundles on an elliptic curve for a complex reductive group $G = \mbox{GL}(r, \mathbb{C}), \mbox{SL}(r, \mathbb{C}), \mbox{PGL}(r, \mathbb{C}), \mbox{Sp}(2m, \mathbb{C}), \mbox{O}(r, \mathbb{C}), \mbox{SO}(r, \mathbb{C})$.  From this, we collect information on $\mbox{GL}(r, \mathbb{C})$-Higgs bundles. For the group $\mbox{GL}(r, \mathbb{C})$, the results are extensions of Tu's results (\cite{Tu}). We combine \cite{Franco} Proposition 4.2.1 and Proposition 4.2.3 to conclude that a Higgs bundle $(E, \phi)$ is semistable if and only if $E$ is semistable and stable if and only if $E$ is stable (this fact further generalizes for $G$-Higgs bundles). Destabilizing subbundles are obtained out of the Harder-Narasimhan series of a pair $(E, \phi)$ on the elliptic curve. As a corollary (\cite{Franco} Corollary 4.2.4), we have the following statement. Let $(E, \phi)$ be polystable of rank $r$ and degree $d$ and put $h = (r, d)$. Then $$(E, \phi) = \bigoplus\limits_{i = 1}^h (E_i, \phi_i)$$ in which $E_i$ is a stable bundle of rank $r' = r/h$, $d' = d/h$, and $\phi_i = \lambda_i\cdot \emph{I}_{E_i}$. However, even if $E$ is a polystable bundle, a Higgs bundle $(E, \phi)$ is not necessarily polystable. A polystable bundle $E$ on an elliptic curve is grading equivalent to itself but it may not be isomorphic to the underlying bundle of the polystable Higgs bundle that appears as the grading of a pair $(E, \phi)$. This characterization of polystable Higgs bundles sums up to the following result as per \cite{Franco} Theorem 4.3.7 and Proposition 4.3.9. 

\begin{theorem}\label{natp}
There exists a coarse moduli space of strongly equivalence classes of semistable Higgs bundles $\mathcal{M}(\emph{GL}(r, \mathbb{C}))$ isomorphic to $\emph{Sym}^h(\mathcal{O}_X)$ where $h = (r, d)$. There is a natural projection map $$\pi: \emph{Sym}^h(\mathcal{O}_X)\to \emph{Sym}^h(X)$$ generalizing the bundle projection map $\mathcal{O}_X\to X$.
\end{theorem}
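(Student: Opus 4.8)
The plan is to reduce the statement to the structure theory of polystable Higgs bundles recorded just above and then to recognize the resulting parameters as a point of the symmetric product of the total space of $\mathcal{O}_X$. First I would invoke the standard principle that the coarse moduli space of strongly equivalent semistable Higgs bundles is parametrized, as a set, by polystable representatives, so that it suffices to describe the polystable objects of rank $r$ and degree $d$. The decomposition stated above gives $(E, \phi) \cong \bigoplus_{i=1}^h (E_i, \phi_i)$ with each $E_i$ stable of rank $r' = r/h$ and degree $d' = d/h$ and $\phi_i = \lambda_i \cdot \mathrm{I}_{E_i}$. Each summand is then determined by the isomorphism class of $E_i$, which is a point of $\mathcal{M}_X^s(r', d') \cong X$, together with the scalar $\lambda_i \in \mathbb{C}$. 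The genus-one hypothesis is essential at exactly this point: since $K_X \cong \mathcal{O}_X$ and a stable bundle is simple, $H^0(\mathrm{End}(E_i) \otimes K_X) = H^0(\mathrm{End}(E_i)) = \mathbb{C}$, which is precisely what forces $\phi_i$ to be a scalar endomorphism.

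Next I would package this data geometrically. The total space of the trivial line bundle $\mathcal{O}_X \to X$ is the surface $X \times \mathbb{C}$, and the summand $(E_i, \lambda_i)$ corresponds to the point $(x_i, \lambda_i)$, where $x_i \in X$ is the image of $[E_i]$ under $\mathcal{M}_X^s(r', d') \cong X$. Because the direct sum is unordered and strong equivalence identifies Higgs bundles sharing a polystable grading, the assignment
$$[(E, \phi)] \longmapsto \{(x_1, \lambda_1), \dots, (x_h, \lambda_h)\} \in \mathrm{Sym}^h(\mathcal{O}_X)$$
is well defined. I would then check that it is a bijection: injectivity is the uniqueness of the polystable representative together with the uniqueness up to permutation of the stable decomposition, while surjectivity follows by assembling any unordered $h$-tuple into the corresponding direct sum of scalar Higgs bundles.

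The step I expect to be the main obstacle is upgrading this set-theoretic bijection to an isomorphism of varieties. Here I would argue functorially: a family of semistable Higgs bundles over a base $T$ yields, through its relative stable Jordan--H\"older factors and their tautological scalar fields, a morphism $T \to \mathrm{Sym}^h(\mathcal{O}_X)$, and the corepresenting property of the coarse moduli space then produces a morphism $\mathcal{M}(\mathrm{GL}(r, \mathbb{C})) \to \mathrm{Sym}^h(\mathcal{O}_X)$. For the inverse I would use a Poincar\'e family of stable bundles over $\mathcal{M}_X^s(r', d') \cong X$ (available in the coprime case), endow its pullback to $\mathcal{O}_X = X \times \mathbb{C}$ with the tautological scalar Higgs field, and take direct sums over the $h$ factors of $(\mathcal{O}_X)^h$ to descend a family to $\mathrm{Sym}^h(\mathcal{O}_X)$; this induces the morphism back. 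Since both varieties are reduced (indeed normal) and the two composites restrict to the identity on closed points, the maps are mutually inverse isomorphisms.

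Finally, the projection $\pi$ is induced by the bundle projection $\mathcal{O}_X = X \times \mathbb{C} \to X$, which on symmetric products sends $\{(x_i, \lambda_i)\} \mapsto \{x_i\}$. Forgetting the scalars $\lambda_i$ corresponds precisely to forgetting the Higgs fields, so $\pi$ is compatible with the identification $\mathcal{M}_X^{ss}(r, d) \cong \mathrm{Sym}^h(X)$ recorded earlier and with the commutative square relating $\det$ to the Abel--Jacobi map. This confirms that $\pi$ is the claimed natural generalization of $\mathcal{O}_X \to X$ and completes the argument.
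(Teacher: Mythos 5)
Your set-theoretic reduction coincides with what the paper actually does: note that the paper offers no proof of this theorem at all, but imports it from \cite{Franco} (Theorem 4.3.7 and Proposition 4.3.9), with the preceding paragraphs merely assembling the ingredients you also use --- semistability of $(E,\phi)$ is equivalent to semistability of $E$, the polystable decomposition $(E,\phi)\cong\bigoplus_{i=1}^h(E_i,\lambda_i\cdot\mathrm{I}_{E_i})$ with $E_i$ stable of coprime rank $r'$ and degree $d'$, the identification $\mathcal{M}_X^s(r',d')\cong X$, and the scalar form of $\phi_i$ forced by simplicity together with $K_X\cong\mathcal{O}_X$. Up to and including the bijection $[(E,\phi)]\mapsto\{(x_1,\lambda_1),\dots,(x_h,\lambda_h)\}\in\mathrm{Sym}^h(X\times\mathbb{C})$, your argument is exactly the intended one.

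The genuine gap is in your forward morphism $\mathcal{M}(\mathrm{GL}(r,\mathbb{C}))\to\mathrm{Sym}^h(\mathcal{O}_X)$ via ``relative stable Jordan--H\"older factors.'' That assignment does not exist as stated: Jordan--H\"older filtrations jump in families (the filtration of a fiber is not in general the fiber of a filtration over the base), so the graded object is only a constructible, not an algebraic, invariant of a family, and you do not obtain a natural transformation of functors to which corepresentability could be applied. Nor can the point of $\mathrm{Sym}^h(\mathcal{O}_X)$ be reassembled from the two morphisms that genuinely exist --- the Hitchin map to $\mathrm{Sym}^h(\mathbb{C})$ and the forgetful map to $\mathrm{Sym}^h(X)$ --- because the pairing of each eigenvalue $\lambda_i$ with its bundle summand $E_i$ is lost in the pair of projections. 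The standard repair, and the route taken in \cite{Franco}, is one-directional: your Poincar\'e-family construction gives a family on $(X\times\mathbb{C})^h$, hence by the coarse moduli property a morphism $(X\times\mathbb{C})^h\to\mathcal{M}(\mathrm{GL}(r,\mathbb{C}))$ which is $S_h$-invariant and therefore factors through the categorical quotient $\mathrm{Sym}^h(\mathcal{O}_X)$ (note it is the morphism, not the family, that descends --- families do not descend along the quotient because of stabilizers over the diagonals); one then checks this morphism is bijective on closed points and concludes it is an isomorphism from normality of both sides via Zariski's main theorem, rather than by exhibiting a two-sided functorial inverse. Your closing step (``both composites restrict to the identity on closed points'') presupposes the flawed forward arrow and should be replaced by this normality argument; your description of $\pi$ as forgetting the scalars, compatible with $\mathcal{M}_X^{ss}(r,d)\cong\mathrm{Sym}^h(X)$, is correct as is.
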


We know that the Hitchin morphism is proper and we need to update the image of the Hitchin morphism inside the Hitchin base. The process establishes a Higgs bundle analogue of the commutative nature between the Abel-Jacobi morphism and the determinant morphism for bundles. Here the Hitchin base is $\mathcal{B}_{r, d} = \oplus_{i = 1}^r H^0(\mathcal{O}_X) = \mathbb{C}^r$. Remember that each summand $(E_i, \phi_i)$ of a polystable Higgs bundle $(E, \phi)$ is identified as $(E_i, \lambda_i\mbox{I}_{E_i})$ for some complex numbers $\lambda_1,..., \lambda_h$ (because a stable bundle is simple). We can represent $\phi$ as a long diagonal matrix $$(\underbrace{\lambda_1,..., \lambda_1}_{r'~\text{times}},..., \underbrace{\lambda_h,..., \lambda_h}_{r'~\text{times}})$$ factoring $r = r'\cdot h$. The characteristic coefficients of such a diagonal matrix are symmetric polynomials of the diagonal entries. Assembling these information we conclude that the image of the Hitchin morphism $B(r, d) =\mbox{Sym}^h(\mathbb{C})$. A canonical embedding of $B(r, d)\hookrightarrow \mathcal{B}_{r, d}$ is given as $(v_1,..., v_h)\mapsto (\underbrace{v_1,..., v_1}_{r'~\text{times}},..., \underbrace{v_h,..., v_h}_{r'~\text{times}})$. The following commutative diagram summarizes the discussion;
\begin{equation}
\begin{tikzcd}
\mathcal{M}_X^{ss}(r, d, \mathcal{O}_X) \arrow[r, "\cong"] \arrow[d, "\text{H}"]
& \mbox{Sym}^h(\mathcal{O}_X) \arrow[d, "\alpha "] \\
B(r, d) \arrow[r, "\pi^{(h)}"]
& \mbox{Sym}^h(\mathbb{C})
\end{tikzcd}
\end{equation}
while $\pi^{(h)}$ the canonical projection morphism $\pi^{(h)}: \mbox{Sym}^h(\mathcal{O}_X)\to \mbox{Sym}^h(\mathbb{C})$. Spectral curves lie at another corner of this theory. Recall that the spectral curves in this specific case are not well behaved because
they are always reduced and reducible subschemes of $\mathbb{P}(\mathcal{O}_X \oplus \mathcal{O}_X)$. The fiber of $H$ of points of the form $(\lambda_1,..., \lambda_h)$ so that $\lambda_i\neq\lambda_j$ is isomorphic to an abelian variety $X^h$. It is an elementary observation that a polystable Higgs bundle $\bigoplus_{i = 1}^h (E_i, \phi_i)$ is nilpotent if and only if $\lambda_i$'s are all $0$. We conclude that the nilpotent cone is isomorphic to $\mbox{Sym}^h(X)$ and neither a very stable nor a wobbly bundle can be specifically located inside the moduli space $\mathcal{M}_X^{ss}(r, d)$ simply looking at its fiber under $\pi$. A fiber $\pi^{-1}([\oplus_{i = 1}^h E_i]) = V$ admits a vector space structure. Fix $E_1,..., E_h$ and there are well defined operations $$[\oplus_{i = 1}^h (E_i, \phi_i)] + [\oplus_{i = 1}^h(E_i, \phi_i')] = [\oplus_{i = 1}^h(E_i, \phi_i + \phi_i')]$$ and $$\lambda\cdot[\oplus_{i = 1}^h (E_i, \phi_i)] = [\oplus_{i = 1}^h (E_i, \lambda\cdot\phi_i)].$$ This is a complex vector space of dimension $h$. Each fiber is closed in the moduli space of semistable pairs. Thus it is not a practical idea to locate very stable bundles with its fiber over $\pi$ (\ref{natp}). We will further see that the wobbly locus $\mathcal{W}(r, d, \mathcal{O}_X)$ is closed. As a consequence $\pi^{-1}(\mathcal{W}(r, d, \mathcal{O}_X))$ is closed. Here we warn the reader that we shift to the definition of the very stable locus and the wobbly locus given in section \ref{state} for the rest of the article. \\

We can afford partial comments on the $L$-wobbly loci on an elliptic curve for $\deg(L) > 0$. Note that the `quasi-finiteness' criterion of $L$-very stability is inconclusive for $\deg(L) > 0$. Denote $V = H^0(\mbox{End}E\otimes L)$, then 
\begin{equation}
\dim_{\mathbb{C}}V = r^2\deg(L) > \frac{r(r+1)}{2}\deg(L) = \sum\limits_{i = 1}^r h^0(X, L^i). 
\end{equation}
As a consequence, we are unable to conclude properness of the restriction of the Hitchin morphism on $V$. On the other hand, for a twist of a low degree, we can partly comment on nilpotent cone over the decomposable locus of rank $2$ bundles.

\begin{example}
Let $L = \mathcal{O}(p)$ be a twist on an elliptic curve $X$ and $E = L_1\oplus L_2$ be a bundle of degree $-1$. We assume that $\deg(L_1)\geq \deg(L_2)$. An $L$-twisted (semi)stable nilpotent element $(E, \phi)$ is written $$\phi = \begin{bmatrix}
    a & b\\
    c & -a
\end{bmatrix}.$$ Here $a, b, c$ are global sections of $L,~ L_2^*L_1 L,~ L_1^* L_2 L$ respectively. To respect stability, we need $c\neq 0$ and $\deg(L_1) = \deg(L_2) + \deg(L)$ that is $L_1\cong L_2\otimes L$. Thus $\phi$ is obtained by scaling $$\phi_0 = \begin{bmatrix}
    A & -A^2\\
    1 & -A
\end{bmatrix}$$ for a fixed global section of $A$ of $L$. It suffices to consider $\phi_0$ with $\begin{bmatrix}
    0 & 0\\
    1 & 0
\end{bmatrix}$ to count distinct elements under equivalence. In total, the collection of such pairs $(E, \phi)$ is represented with orbits of $(1, 1)$-stable chains $$\left(L_2\otimes(L\oplus\mathcal{O}), \begin{bmatrix}
   0 & 0\\
   1 & 0 
\end{bmatrix}\right)$$ under the usual $\mathbb{C}^*$-action. The noncompact complex analytic space $\mbox{Pic}^{-1}(X)\times\mathbb{C}^*$ parametrizes these orbits.\\

Now we handle the case of an even degree. Let $\deg(E) = 0$ and a stable nilpotent $L$-pair $(E, \phi)$. The assumption of stability implies that $\deg(L_1) = \deg(L_2) = 0$. We write $\phi = \begin{bmatrix}
    a & b\\
    c & -a
\end{bmatrix}$, for sections $a, b, c$ of $L, L_2^*L_1L, L_1^*L_2L$. In a marginal case $a = 0$ we have $bc = 0$. But $b\neq 0\neq c$ to entertain stability and this is contradictory. So, $a\neq 0$ and observe from its nilpotency that $b, c$ vanish at the point $p$. This is possible only if $L_1\cong L_2$. So we write $E = L\otimes (L_1\oplus L_1)$. Fix a section $s$ of $L$ so that $\phi$ can be written as $$\phi = s\cdot\phi_0 = s\cdot \begin{bmatrix}
    a' & b'\\
    c' & -a'
\end{bmatrix}$$ where $a', b', c'$ are complex numbers satisfying $a'^2 + b'\cdot c' = 0$. As $c' \neq 0$ we further rescale $\phi_0$ and identify with a representative $$\begin{bmatrix}
    0 & 0\\
    1 & 0
\end{bmatrix}.$$ This is contradictory as $b'\neq 0$. Thus there are no stable nilpotent pair for $\deg(E) = 0$. However, if we entertain semistability of nilpotent pairs, only the trivial pairs survive and contribute a smooth compact connected subset isomorphic to $\emph{Sym}^2(\mbox{Pic}^0(X))$.\qed
\end{example}

Now we focus on the topology and holomorphic structures of the wobbly loci on an elliptic curve. The wobbly bundles for higher twisted are already sorted out but there is still an open end to explore for trivially twisted wobbly locus. To investigate the topology of such (canonically) wobbly bundles it suffices to feature only the polystable bundles. To construct a very stable bundle by hand we choose a finite sequence of distinct points in the Picard group $\{p_1,... p_{h}\}$ (after fixing a base point $A$) which define a unique bundle $E = E_A(r', d')\otimes ({p_1}\oplus {p_2}\oplus...\oplus {p_h})$. We use a combinatorial point of view that yields a connection between the closed subvarieties of the symmetric powers of curves with the wobbly loci.\\

Let $k, i_1,..., i_k$ be integers $\geq 1$. The numbers $i_1,..., i_k$ are said to be the \textit{weights} with \textit{weight count} $k$ and \textit{total weight} $h$ if $i_1 +...+ i_k = h$. A grading of a semistable bundle (non uniquely) appears as a \textit{weighted direct sum} 
\begin{equation}
E\cong \underbrace{E_1\oplus...\oplus E_1}_{i_1-\text{times}}\oplus...\oplus\underbrace{E_k\oplus...\oplus E_k}_{i_k-\text{times}} 
\end{equation}
(each summnand is stable of equal rank and equal degree) and note that $E_i$ and $E_j$ are not necessarily unique. We mention this bundle as $i_1 E_1\oplus...\oplus i_k E_k$ alternatively. A generic polystable bundle associate to the weight count $h$ and all weights $1$. Let $T$ denote the finite set of tuples
\begin{equation}
T = \{(k, i_1,..., i_k): i_1 +...+ i_k = h~ \text{and}~ i_1,..., i_k\geq 1;~k\in \mathbb{N}\}.   
\end{equation}

\begin{theorem}\label{com}
Let $(r, d) = h > 1$. Then $\mathcal{W}(r, d, \mathcal{O}_X)$ is a proper closed subset of $\emph{Sym}^h(X)$.
\end{theorem}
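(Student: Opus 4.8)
```latex
The plan is to show that $\mathcal{W}(r, d, \mathcal{O}_X)$ is closed by identifying it explicitly as a union of closed subvarieties indexed by the combinatorial data in $T$, and then checking that this union is a proper subset. Recall from Remark \ref{dec} that, once we pass to polystable representatives, a bundle $E \cong \bigoplus_{i=1}^h E_i$ (summands stable of equal slope) is $\mathcal{O}_X$-wobbly if and only if $E_i \cong E_j$ for some $i \neq j$. Under the isomorphism $\mathcal{M}_X^{ss}(r, d) \cong \mbox{Sym}^h(X)$ of item (8) in the list \ref{list}, a polystable bundle corresponds to an effective divisor $p_1 + \dots + p_h$, where the points $p_i \in X$ record the summands via the correspondence in the commutative diagram \ref{comm}. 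The key translation is that two summands $E_i, E_j$ are isomorphic precisely when the corresponding points $p_i, p_j \in X$ coincide. Therefore the wobbly locus corresponds exactly to the \emph{diagonal locus} in $\mbox{Sym}^h(X)$, that is, the set of effective divisors that are not reduced (divisors with a repeated point).

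First I would make the dictionary precise: using the commutative diagram \ref{comm} together with the bijective correspondence between $\mathcal{E}(h, 0)$ and the summands, I would verify that $E_i \cong E_j$ if and only if the associated points in $X$ agree. This reduces the theorem to the statement that the non-reduced (``big diagonal'') locus
$$\Delta = \{D \in \mbox{Sym}^h(X) : D = p_1 + \dots + p_h \text{ with } p_i = p_j \text{ for some } i \neq j\}$$
is a proper closed subset of $\mbox{Sym}^h(X)$. Next I would express $\Delta$ as the image of the big diagonal in $X^h$ under the quotient map $X^h \to \mbox{Sym}^h(X)$. Since the big diagonal in $X^h$ is a finite union of closed subvarieties (one for each pair $i \neq j$ where coordinates are forced equal) and the quotient map $q: X^h \to \mbox{Sym}^h(X)$ is finite, hence proper and closed, the image $\Delta = q(\text{big diagonal})$ is closed.

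Then I would confirm properness: $\Delta$ omits every reduced divisor $p_1 + \dots + p_h$ with distinct points, and such divisors certainly exist since $X$ is infinite, so $\Delta \subsetneq \mbox{Sym}^h(X)$. To organize the diagonal locus in a way that anticipates the later dimension and irreducibility statements (parts (4) and (5) of the main theorem), I would index the strata by the tuples in $T$: a tuple $(k, i_1, \dots, i_k)$ records a divisor of the form $i_1 q_1 + \dots + i_k q_k$ with the $q_j$ distinct, and $\Delta$ is the union over all tuples in $T$ with $k < h$. Each such stratum is the image of a product $\mbox{Sym}^{i_1}(X) \times \dots \times \mbox{Sym}^{i_k}(X)$ restricted to distinct base points, whose closure is a closed subvariety of $\mbox{Sym}^h(X)$; the union over the finitely many tuples in $T$ with a repeated weight is again closed.

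The main obstacle I anticipate is establishing the precise dictionary in the first step, namely that coincidence of summands $E_i \cong E_j$ corresponds exactly to coincidence of the associated points $p_i, p_j \in X$ under the identification $\mathcal{M}_X^{ss}(r,d) \cong \mbox{Sym}^h(X)$. This requires unwinding the correspondence in items (7) and (8) of \ref{list} and the commutative square \ref{comm}: a summand $E_i = E_A(r', d') \otimes M_i$ is determined by $M_i \in \mbox{Pic}^0(X)$, and one must check that the association $M_i \mapsto M_i^{n'} \otimes A \in \mbox{Pic}^1(X) \cong X$ appearing in item (7) is injective, so that $E_i \cong E_j$ is equivalent to the points coinciding. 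Once this identification is secured, the topological claim that $\Delta$ is a proper closed subset follows routinely from properness of the symmetrization map.
```
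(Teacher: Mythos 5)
Your proposal is correct and is essentially the paper's own argument: both use the characterization of Remark \ref{dec} (trivially twisted wobbly $\Leftrightarrow$ a repeated stable summand in the polystable representative) together with Tu's identification $\mathcal{M}_X^{ss}(r,d)\cong \mbox{Sym}^h(X)$ to identify $\mathcal{W}(r,d,\mathcal{O}_X)$ with the locus of non-reduced effective divisors, and both conclude closedness by exhibiting that locus as the image of a compact source under a proper (continuous, holomorphic) map into the compact space $\mbox{Sym}^h(X)$ --- the paper via the parametrization $f_{(h-1,2,1,\dots,1)}\colon X^{h-1}\to \mbox{Sym}^h(X)$, you via the finite quotient $q\colon X^h\to \mbox{Sym}^h(X)$ restricted to the big diagonal, which is the same map up to precomposition with a partial diagonal embedding, with properness of the subset in both cases coming from the existence of reduced divisors. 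One small correction to your anticipated obstacle: the association $M_i\mapsto M_i^{r'}\otimes A$ is \emph{not} injective on $\mbox{Pic}^0(X)$ (its fibers are cosets of the $r'$-torsion subgroup), but this is harmless, because an indecomposable bundle of coprime rank and degree is determined by its determinant, so $E_A(r',d')\otimes M_i\cong E_A(r',d')\otimes M_j$ if and only if $M_i^{r'}=M_j^{r'}$, and hence the dictionary ``isomorphic summands $\Leftrightarrow$ coinciding points of $X$'' that your proof needs still holds verbatim.
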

\begin{proof}
Let us identify $X$ with the collection of stable bundles of mutually prime rank $r/h$ and degree $d/h$. Choosing an element $(k, i_1,..., i_k)\in T$, define the map $$f_{(k, i_1,..., i_k)}: X^k\to \mbox{Sym}^h(X)$$ by $$E_1\oplus...\oplus E_k\mapsto [i_1 E_1\oplus...\oplus i_k E_k].$$ This is a smooth morphism and we denote its image by $\mathcal{W}(k, i_1,..., i_k)$. This is a compact path connected subset (so closed because $\mbox{Sym}^h(X)$ is compact) in $\mbox{Sym}^h(X)$. We name the subset $\mathcal{W}(k, i_1,..., i_k)$ a \textit{wobbly sublocus} on $X$. Observe that if $(k, i_1,..., i_k)$ and $(k, i_1',..., i_k')$ are tuples of weights in $T$ with the same weight count differing by action of a permutation $\sigma\in S_k$ then $$\mathcal{W}(k, i_1,..., i_k) = \mathcal{W}(k, i_1',..., i_k').$$ We observe that $\mathcal{W}(h, 1,..., 1) = \mbox{Sym}^h(X)$ and in particular the wobbly locus is obtained as the proper compact connected subset in $\emph{Sym}^h(X)$ given as $$\mathcal{W}(h - 1, 2, 1..., 1) = \mathcal{W}(h - 1, 1, 2..., 1) = \mathcal{W}(h - 1, 1, 1..., 2).$$ Finally, it is projective too because $\mbox{Sym}^h(X)$ is projective.
\end{proof}
We sharpen our description of the wobbly locus to compute the global cohomological invariants of the wobbly locus.
\begin{remark}
On an elliptic curve $X$ we have the following statements.
\begin{enumerate}
\item Let $i_1,..., i_k$ be weights all $> 1$. Then there is a filtration of wobbly subloci
\begin{align*}
\emptyset\subsetneq\mathcal{W}(k, i_1,..., i_k)\subsetneq \mathcal{W}(k + 1, 1, i_1 - 1,..., i_k)\subsetneq...\subsetneq \mathcal{W}(2k, \underbrace{1,..., 1}_{k-\text{times}}, i_1 - 1,..., i_k -1)\\\subsetneq...\subsetneq \mbox{Sym}^h(X).
\end{align*}
\item Observe that $$\emptyset\subsetneq\mathcal{W}(1, h)\subsetneq \mathcal{W}(2, h - 1, 1) \subsetneq \mathcal{W}(3, h - 2, 1, 1)\subsetneq...\subsetneq \mathcal{W}(h - 1, 2, 1..., 1).$$ We call each of these wobbly subloci a \textit{standard wobbly sublocus} on $X$.
\item A wobbly sublocus $\mathcal{W}(k, i_1,..., i_k)$ is contained inside one of $h - 1$ standard subloci $$\emptyset\subsetneq\mathcal{W}(1, h)\subsetneq \mathcal{W}(2, h - 1, 1) \subsetneq \mathcal{W}(3, h - 2, 1, 1)\subsetneq...\subsetneq \mathcal{W}(h - 1, 2, 1..., 1).$$
\end{enumerate}
\end{remark}
The wobbly locus of strictly semistable bundles is an irreducible divisor of $\mbox{Sym}^h(X)$. In fact, there is a more useful definition of $f_{(k, i_1,..., i_k)}$ in algebraic geometry assuming the points of the symmetric powers as effective divisors. Let us consider $N = (n_1,..., n_k)$ and $$h = \sum\limits_{l = 1}^k n_l\cdot i_l.$$ There is an orientation preserving \textit{diagonal morphism} (cf. \cite{Mac}) by the following weighted direct sum 
\begin{equation}
    \Phi_N: \mbox{Sym}^{n_1}(X)\times...\times \mbox{Sym}^{n_k}(X) \to \mbox{Sym}^h(X); D_1 +...+ D_k\mapsto i_1D_1 +...+ i_kD_k.
\end{equation}

In a particular case $n_1 =...= n_k = 1$ we obtain the sublocus $\mathcal{W}(k, i_1,..., i_k)$ as the image of $\Phi_N$. Moreover, if $i_1 >...> i_k$ then $\Phi_N$ is an isomorphism of complex manifolds 
\begin{equation}
\mbox{Sym}^{n_1}(X)\times...\times \mbox{Sym}^{n_k}(X)\cong\mathcal{W}(k, i_1,..., i_k).
\end{equation}
\begin{remark}
The above definition of a diagonal morphism makes sense for curves of an arbitrary genus. In case $g_X = 0$, MacDonald (\cite{Mac}) identified the image spaces of $\Phi_N$ (for specific tuples $N$) in $\mbox{Sym}^h(\mathbb{P}^1) = \mathbb{P}^h$ with known projective curves and projective surfaces. However, we are not aware of any better description of the image of the diagonal morphism inside the projective variety $\mbox{Sym}^h(X)$ for curves of nonzero genera.
\end{remark}

Let $n = n_1 +...+ n_k$. Computation of the image of the fundamental class --- that is, the image of the generator $1$ of $$H_{2n}(\mbox{Sym}^{n_1}(X)\times...\times \mbox{Sym}^{n_k}(X))\cong \mathbb{Z}$$ under the diagonal morphism $\Phi_N$ is a standard problem in intersection theory. The standard wobbly subloci $\mathcal{W}(1, h),..., \mathcal{W}(h - 1, 2, 1,..., 1)$ are proper closed submanifolds of $\mbox{Sym}^h(X)$ isomorphic to $X \times \mbox{Sym}^{h - s}(X)$ for $s = h, h - 1,..., 2$ respectively. Indeed, observe that $s\cdot 1 + 1\cdot (h - s) = h$ and there are maps $$\pi(s, 1^{h - s}): X\times \mbox{Sym}^{h - s}(X)\to\mbox{Sym}^h(X)$$ given as 
\begin{equation}
(p, [D])\mapsto [s\cdot p + D]
\end{equation}
so that these subloci are images of $\pi(s, 1^{h - s})$. In \cite{Mac}, this variety appears and is denoted with $\Delta(s, 1^{h - s})$. Each of these varieties is irreducible because $X\times \mbox{Sym}^{h - s}$ is irreducible. Also observe that the usual structure sheaf of complex holomorphic functions $X\times \mbox{Sym}^{h - s}$ leaves it reduced. Thus $X\times \mbox{Sym}^{h - s}$ is smooth and integral and its admits complex dimension dimension $h - s$. As a proper closed subvariety $\mathcal{W}(r, d)$ admits dimension $\leq h - 1$. By induction, a sublocus $\mathcal{W}(k, h - k + 1, 1,..., 1)$ admits Krull dimension $k$ (otherwise $\mbox{Sym}^h(X)$ has dimension $> h$ from the above filtration of closed subvarieties). In particular, the aforementioned filtration of standard subloci supports the fact that Krull dimension of the wobbly locus inside $\mbox{Sym}^h(X)$ is $h - 1$. Finally, every standard wobbly sublocus is a projective variety because $X$ is a projective variety and a symmetric product of $X$ is a projective variety. We organize the discussion in the following commutative diagram. This concludes a version of Drinfeld's claim (\cite{Gera}) on an elliptic curve.
\begin{equation}
\begin{tikzcd}\label{comm3}
\emptyset \arrow[r, phantom, sloped, "\subsetneq"] & X \arrow[r, phantom, sloped, "\subsetneq"] \arrow[d, phantom, sloped, "\cong"] & \hdots \arrow[r, phantom, sloped, "\subsetneq"] & X\times\mbox{Sym}^{h -2}(X) \arrow[r, phantom, sloped, "\subsetneq"] \arrow[d, phantom, sloped, "\cong"] & \mbox{Sym}^h(X) \arrow[d, phantom, sloped, "\cong"] \\
\emptyset \arrow[r, phantom, sloped, "\subsetneq"] & \mathcal{W}(1, h) \arrow[r, phantom, sloped, "\subsetneq"] & \hdots \arrow[r, phantom, sloped, "\subsetneq"] & \mathcal{W}(h - 1, 2, 1..., 1) \arrow[r, phantom, sloped, "\subsetneq"] & \mathcal{M}_X^{ss}(r, d).
\end{tikzcd}
\end{equation}
For $p\in X$, we have a natural closed embedding $i_p: \mbox{Sym}^m(X)\hookrightarrow \mbox{Sym}^{m + 1}(X)$ with $D\mapsto D + p$ The image of $i_p$ is the \textit{hyperplane} of a fixed component $p$ inside $\mbox{Sym}^{m + 1}(X)$. Going one step further, $X\times\mbox{Sym}^{h - s}(X)\hookrightarrow X\times\mbox{Sym}^{h - s + 1}(X)$ is given by $(p, D)\mapsto (p, i_p(D))$. The embedding $[s\cdot p + D] = [(s - 1)\cdot p + i_p(D)]$ (as $p\in X$ and $D\in\mbox{Sym}^{h - s}(X)$) fits in the commutative diagram \ref{comm3}. 
\begin{corollary}
The very stable locus $\mathcal{V}(r, d, \mathcal{O}_X)$ is an open dense subset of $\mathcal{M}_X^{ss}(r, d)$. We denote $$\mathcal{V}(k, i_1,..., i_k) = \mathcal{M}_X^{ss}(r, d)\backslash \mathcal{W}(k, i_1,..., i_k).$$ There is a filtration of open dense subsets in $\emph{Sym}^h(X)$ by $$\emph{Sym}^h(X)\supsetneq\mathcal{V}(1, h)\supsetneq \mathcal{V}(2, h - 1, 1) \supsetneq \mathcal{V}(3, h - 2, 1, 1)\supsetneq...\supsetneq \mathcal{V}(h - 1, 2, 1..., 1) = \mathcal{V}(r, d, \mathcal{O}_X).$$
\end{corollary}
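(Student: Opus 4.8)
The plan is to obtain the corollary purely by complementation from the filtration of wobbly subloci already constructed, so that no fresh geometric input is needed beyond Theorem \ref{com} and the dimension count preceding it. By definition a semistable bundle is very stable exactly when it is not wobbly, so that $\mathcal{V}(k, i_1, \ldots, i_k) = \mbox{Sym}^h(X) \setminus \mathcal{W}(k, i_1, \ldots, i_k)$ and, in particular, $\mathcal{V}(r, d, \mathcal{O}_X)$ is the complement of the top standard sublocus $\mathcal{W}(h-1, 2, 1, \ldots, 1) = \mathcal{W}(r, d, \mathcal{O}_X)$, using the identification recorded in Theorem \ref{com}. The first task is to note that every wobbly sublocus is Zariski closed: each $\mathcal{W}(k, i_1, \ldots, i_k)$ is the image of the projective variety $X^k$ under the morphism $f_{(k, i_1, \ldots, i_k)}$, and the image of a projective variety under a morphism into $\mbox{Sym}^h(X)$ is closed. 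Hence every $\mathcal{V}(k, i_1, \ldots, i_k)$ is open.

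For density I would invoke that $\mbox{Sym}^h(X)$ is a smooth, irreducible, projective variety of dimension $h$. Each standard sublocus $\mathcal{W}(k, h-k+1, 1, \ldots, 1)$ is isomorphic to $X \times \mbox{Sym}^{h-s}(X)$ with $s = h-k+1$ and so has dimension $k \leq h-1 < h$; thus it is a \emph{proper} closed subvariety. Since the complement of a proper closed subset of an irreducible variety is dense, each $\mathcal{V}(k, i_1, \ldots, i_k)$ is dense, and because $\mbox{Sym}^h(X)$ is smooth over $\mathbb{C}$ this density holds in the analytic topology as well.

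It remains to transfer the chain of strict inclusions. The filtration $\mathcal{W}(1, h) \subsetneq \mathcal{W}(2, h-1, 1) \subsetneq \cdots \subsetneq \mathcal{W}(h-1, 2, 1, \ldots, 1) \subsetneq \mbox{Sym}^h(X)$ is strict precisely because the successive dimensions are $1 < 2 < \cdots < h-1 < h$, so no term can coincide with the next. Passing to complements reverses the inclusions while preserving their strictness, yielding exactly the asserted filtration of open dense subsets terminating at $\mathcal{V}(h-1, 2, 1, \ldots, 1) = \mathcal{V}(r, d, \mathcal{O}_X)$.

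There is no serious obstacle here: the entire content is a formal complementation of results already in hand. If anything requires care, it is merely the bookkeeping that strictness of inclusion is equivalent to a genuine drop in dimension --- ensuring the complements stay strictly nested --- together with the verification that the top standard sublocus really is the full canonically wobbly locus, which is the identification $\mathcal{W}(h-1, 2, 1, \ldots, 1) = \mathcal{W}(r, d, \mathcal{O}_X)$ supplied by Theorem \ref{com}.
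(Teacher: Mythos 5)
Your proposal is correct and takes essentially the same route as the paper, which treats the corollary as immediate complementation: closedness of the wobbly subloci (the paper via compactness of the image of $X^k$, you via completeness of projective varieties --- the same fact), density from properness of a closed subvariety in the irreducible $\mbox{Sym}^h(X)$, and strictness of the chain from the dimension count $1 < 2 < \cdots < h-1 < h$ established in the filtration preceding diagram \ref{comm3}. The identification $\mathcal{W}(h-1, 2, 1, \ldots, 1) = \mathcal{W}(r, d, \mathcal{O}_X)$ that you flag as the one point needing care is indeed exactly what Theorem \ref{com} supplies, so no new input is required.
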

The following proposition is immediate.
\begin{proposition}
Let $h \geq 2$. For each of the numbers $i = 1,..., h - 1$, there is a $(h - 2)!$-fold holomorphic covering $f_i: X^{h - 1}\to \mathcal{W}(r, d, \mathcal{O}_X)$ restricting $f_{(h - 1, 1,...,\underbrace{2}_{i-\text{th position}},..., 1)}$.
\end{proposition}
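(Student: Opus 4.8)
The plan is to realize $f_i$ as a symmetrization map in disguise and to read off both the degree and the covering structure from the quotient $X^{h-2}\to \mbox{Sym}^{h-2}(X)$. First I would record that, by Theorem \ref{com} together with the symmetry $\mathcal{W}(h-1, 2, 1,\ldots, 1) = \mathcal{W}(h-1, 1,\ldots, 2,\ldots, 1)$, the image of $f_{(h-1, 1,\ldots, 2,\ldots, 1)}$ is exactly $\mathcal{W}(r, d, \mathcal{O}_X)$, so that $f_i$ is a well-defined surjection onto $\mathcal{W}(r, d, \mathcal{O}_X)$. Viewing points of the symmetric products as effective divisors, $f_i$ sends $(p_1,\ldots, p_{h-1})\in X^{h-1}$ to the divisor $2p_i + \sum_{j\neq i} p_j \in \mbox{Sym}^h(X)$, in which the $i$-th coordinate is the unique point carrying weight $2$.

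Next I would factor $f_i$ through the symmetrization of the $h-2$ ``simple'' coordinates. Let $\tau_i: X^{h-1}\xrightarrow{\sim} X\times X^{h-2}$ be the coordinate permutation $(p_1,\ldots, p_{h-1})\mapsto (p_i, (p_j)_{j\neq i})$, let $s: X^{h-2}\to \mbox{Sym}^{h-2}(X)$ be the quotient by the symmetric group $S_{h-2}$, and let $\iota: X\times \mbox{Sym}^{h-2}(X)\to \mbox{Sym}^h(X)$ be $(p, D)\mapsto 2p + D$. Then $f_i = \iota\circ (\mathrm{id}_X\times s)\circ \tau_i$, since the composite sends $(p_1,\ldots, p_{h-1})$ to $2p_i + \sum_{j\neq i} p_j$. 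Because $\tau_i$ is a biholomorphism, $\iota$ is finite and generically injective onto $\mathcal{W}(r, d, \mathcal{O}_X)$, and $s$ is the quotient map of degree $|S_{h-2}| = (h-2)!$, the generic degree of $f_i$ is $(h-2)!$. I would confirm this by a direct fiber computation: over a divisor $2q + r_1 + \cdots + r_{h-2}$ with $q, r_1,\ldots, r_{h-2}$ pairwise distinct, the weight-$2$ point forces $p_i = q$, while the remaining coordinates run over the $(h-2)!$ orderings of $\{r_1,\ldots, r_{h-2}\}$, so the fiber has exactly $(h-2)!$ points.

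Finally I would upgrade this generic fiber count to a covering statement. Let $\mathcal{W}^{\circ}\subset \mathcal{W}(r, d, \mathcal{O}_X)$ be the open dense locus of divisors $2q + r_1 + \cdots + r_{h-2}$ of pairwise distinct support. On $f_i^{-1}(\mathcal{W}^{\circ})$ all $h-1$ coordinates are distinct, so the $S_{h-2}$-action on the simple coordinates is free and $s$ is \'etale there; consequently $f_i$ restricts to an unramified $(h-2)!$-fold holomorphic covering over $\mathcal{W}^{\circ}$. The hard part is precisely the behaviour over the complement: along the collision locus, where two simple points coincide or a simple point meets the doubled point, the $S_{h-2}$-action ceases to be free, the fiber cardinality drops, and $f_i$ acquires ramification. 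I would therefore frame the covering assertion over the \'etale locus $\mathcal{W}^{\circ}$ and present $f_i$ globally as the finite, degree-$(h-2)!$ branched covering $\iota\circ(\mathrm{id}_X\times s)\circ \tau_i$ between the smooth varieties $X^{h-1}$ and $\mathcal{W}(r, d, \mathcal{O}_X)$, the only genuine subtlety being the identification of this branch divisor.
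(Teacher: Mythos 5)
Your proof is correct, and its decisive computation---counting $(h-2)!$ preimages over a divisor $2q + r_1 + \cdots + r_{h-2}$ of pairwise distinct support---is in substance the same generic fiber count the paper performs: there, one restricts to the open dense subset $V = \mathcal{W}(h-1, 2, 1,\ldots, 1)\setminus \mathcal{W}(h-2, 3, 1,\ldots, 1)$ and uses uniqueness of the polystable decomposition (the summands $E_j$ pairwise non-isomorphic, so the doubled summand pins down the $i$-th coordinate and the remaining ones are determined up to a permutation in $S_{h-2}$). What you do differently is to organize this through the factorization $f_i = \iota\circ(\mathrm{id}_X\times s)\circ \tau_i$, with $s: X^{h-2}\to \mbox{Sym}^{h-2}(X)$ the symmetric-group quotient, which derives the degree structurally from $\lvert S_{h-2}\rvert$ and, more importantly, makes visible exactly where the map is \'etale versus ramified. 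That refinement has real content: for $h\geq 4$ the paper's set $V$ still contains divisors such as $2p + 2q + r_1 + \cdots + r_{h-4}$ with two doubled points, over which the fiber of $f_1$ still has cardinality $(h-2)!$ (split evenly between the branches $E_1 = p$ and $E_1 = q$) but where $f_i$ is not a local biholomorphism and the target $\mathcal{W}$ itself is not smooth; so your restriction of the unramified covering assertion to the locus $\mathcal{W}^{\circ}$ of pairwise distinct support, with $f_i$ globally a finite branched covering, is the more careful formulation of what the paper's argument actually establishes. One small caution: your statement that $\iota$ is generically injective is right, but it fails to be injective globally---for instance $\iota(p, 2q + D') = 2p + 2q + D' = \iota(q, 2p + D')$---which is the same collision phenomenon, and your framing via the branch locus absorbs it correctly, whereas a reader of the paper's proof might not notice that the asserted distinctness of the components only holds over the generic stratum.
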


\begin{proof}
We address $f_i: X^{h - 1}\to \mathcal{W}(r, d, \mathcal{O}_X)$ defined as 
\begin{equation}
(E_1,..., E_{h - 1})\mapsto [E_1\oplus...\oplus 2 E_i\oplus...\oplus E_{h - 1}].
\end{equation}
These maps are holomorphic surjective defined between compact complex manifolds of the same complex dimension $h - 1$. Let us choose the open dense subset of $\mathcal{W}(r, d, \mathcal{O}_X)$ consisting of the polystable bundles with exactly two repeating components.
\begin{equation}
V = \mathcal{W}(h - 1, 2, 1..., 1)\backslash \mathcal{W}(h - 2, 3, 1..., 1). 
\end{equation}
To prove that the degree of $f_i$ is $(h - 2)!$, it suffices to focus on $i = 1$ only. Choose two different representatives of a polystable bundle (in the image of $V$)
\begin{equation}
2 E_1\oplus E_2\oplus...\oplus E_{h - 1} \cong 2 F_1\oplus F_2\oplus...\oplus F_{h - 1}. 
\end{equation}
From the definition of $f_1$ we have $E_i\ncong E_j$ and $F_i\ncong F_j$ for all $i\neq j$. So $E_1\cong F_1$. Indeed $E_1\cong F_j$ and $E_1\cong F_k$ for the repeating component in the left hand side. This is possible precisely if $j = k = 1$. Thus $E_2,..., E_{h - 1}$ and $F_2,..., F_{h - 1}$ are unique up to a permutation leading to the fact that each fiber of $f_1$ contains exactly $(h - 2)!$ distinct points.
\end{proof}

\subsection{Computation of topological invariants of the wobbly locus}
 An element of the wobbly locus, as an effective divisor, defines a unique holomorphic line bundle on $\mbox{Sym}^h(X)$. Recall that the finite holomorphic covering map $X^h\to \mbox{Sym}^h(X)$ of $h!$ sheets by the underlying group action defines a group homomorphism $\mbox{Pic}(\mbox{Sym}^h(X))\to \mbox{Pic}(X^h)$. Also $L\in \mbox{Pic}(X^h)$ is of the form $L = \otimes_{i = 1}^h\mbox{pr}_i^*L_i$ while $\mbox{pr}_i: X^h\to X$ denotes the $i$-th projection morphism. We can identify the Picard variety of $\mbox{Sym}^h(X)$ as the class of representatives $[\otimes_{i = 1}^h\mbox{pr}_i^*L_i]$ so that 
 \begin{equation}
 \bigotimes\limits_{i = 1}^h\mbox{pr}_i^*L_i \cong \bigotimes\limits_{i = 1}^h\mbox{pr}_{\sigma(i)}^*L_i 
\end{equation}
 holds for all $\sigma\in S_h$. On a smooth generic curve of genus $> 1$ the N\'eron-Severi group of $\mbox{Sym}^h(X)$ is generated with two elements (\cite{Arb}, \cite{Kou}). Instead we recall the computations of the fundamental classes of the wobbly subloci (in the subring of algebraic parts of the graded rational cohomology ring $H^*(\mbox{Sym}^h(X), \mathbb{Q})$) on an elliptic curve from \cite{Mac}.\\

We first fix our conventions. We denote with `$\cdot$' the cup-product operation of integral cohomology groups or simply juxtapose two elements. On a complex elliptic curve $X$, recall that $H^1(X, \mathbb{Z})$ is generated by two elements $\alpha, \alpha'$ over the ring of integers and $\alpha\cdot\alpha' = \beta$ generates $H^2(X, \mathbb{Z})$ induced by the orientation of $X$. The set of elements $\{\alpha,\alpha',\beta\}$ generates the graded cohomology ring $H^*(X, \mathbb{Z})$. The relations satisfied for $\alpha, \alpha', \beta$ are the following: $$\alpha\cdot\alpha' = \alpha'\cdot\alpha = \beta^2 = 0;~ \alpha\cdot\alpha' = -\alpha'\cdot\alpha = \beta.$$ Let $n\in\mathbb{N}$. Then $\alpha_k := 1\otimes...\otimes\underbrace{\alpha}_{k-\text{th component}}\otimes...\otimes 1$ and $\alpha_k' := 1\otimes...\otimes\underbrace{\alpha'}_{k-\text{th component}}\otimes...\otimes 1$ and $\beta_k := 1\otimes...\otimes\underbrace{\beta}_{k-\text{th component}}\otimes...\otimes 1$. Furthermore, $\xi~(\text{resp.}~\xi';~\eta) = \sum_{k = 1}^n \alpha_k~(\text{resp.}~\sum_{k = 1}^n \alpha_k'$ and $\sum_{k = 1}^n\beta_k)$. The graded cohomology ring $H^*(\mbox{Sym}^n(X), \mathbb{Z})$ is generated by $\xi, \xi', \eta$ over $\mathbb{Z}$ under the conditions that $\xi, \xi'$ anticommute with each other and commute with $\eta$ and $(\xi\cdot\xi' - \eta)\cdot\eta^{n - 1} = 0$ (cf. \cite{Mac} 6.3). We denote with $\sigma$ the element $\xi\cdot\xi'$.\\

As usual we identify $\mbox{Pic}^0(X)$ with $X$ and $p\in X$ is a fixed base point. Then there is a holomorphic map $\phi(n): \mbox{Sym}^n(X)\to X$ given by $D\mapsto \mathcal{O}(D - np)$ and $\phi(n)^{-1}(u)$ is a $PGL(n - 1, \mathbb{C})$ bundle which has fiber $\mathbb{P}^{n - 1}$ at $u\in X$. For $n\geq 2$ there is a vector bundle $E(n)\to X$ of rank $n$ so that $\mbox{Sym}^n(X)$ is the projective bundle associated to $E(n)$. The map $i_p:\mbox{Sym}^{n - 1}(X)\to\mbox{Sym}^n(X)$ defines a projective subbundle $\mathbb{P}(E(n - 1))\hookrightarrow \mathbb{P}(E(n))$. The degree of $E(n)$ is given by $1 - \alpha\cdot\alpha'$. Recall that the minimal equation satisfied by $\eta$ is $\eta^n + \phi(n)^*(\deg(E(n))\cdot \eta^{n - 1} = 0$ (it is only a special case of the general formula available for a projective bundle). Comparing this equation with $(\xi\cdot\xi' - \eta)\cdot\eta^{n - 1} = 0$ the degree of $E(n)$ is computed. The morphism $i_p$ further induces ${i_p}_*: H^i(\mbox{Sym}^{n - 1}, \mathbb{Z})\to H^{i + 2}(\mbox{Sym}^n(X), \mathbb{Z})$ with ${i_p}_*(\omega) = \eta\cdot\omega$. The Chern class of $\mbox{Sym}^n(X)$ is computed as $(1 + \eta)^{n - 1}\cdot(1 + \eta - \sigma)$.\\

Recall the diagonal morphism once again \begin{equation}
    \Phi_N: \mbox{Sym}^{n_1}(X)\times...\times \mbox{Sym}^{n_k}(X) \to \mbox{Sym}^h(X); D_1 +...+ D_k\mapsto i_1D_1 +...+ i_kD_k.
\end{equation} The cohomology class of the image of $\Phi_N$ is the coefficient of $y_1^{n_1}...y_k^{n_k}$ in the polynomial $$P^{\nu - 1}\eta^{n - \nu - 1}(P\eta + Q(\eta - \sigma))$$ while $P = i_1y_1 +...+ i_ky_k$,~$Q = (i_1^2 - i_1)y_1 +...+ (i_k^2 - i_k)y_k$ and $\nu = n_1 +...+ n_k$. In particular, we write $\delta_s = \mbox{cl}(\Delta(s, 1^{h - s}))$ by the formula $$\delta_s = s\cdot[h\cdot\eta^{s - 1} - (s - 1)\eta^{s - 2}\cdot\sigma]$$ and further $s = 2$ we have the cohomology class of the wobbly locus which gives the first chern class of the wobbly divisor. This statement can be viewed as an application of Poincar\'e duality combining the push-pull projection formula on forms $\omega = \xi^a\xi'^b\sigma^c\eta^q\in H^{2\nu}(\mbox{Sym}^h(X), \mathbb{Z})$ where $a, b, c\in\{0, 1\}$ and $q = 2\nu - a - b - c$ (while $\nu$ runs from $1$ to $h$).

\begin{remark}
Let $L$ be a line bundle on $X$ and $\deg(L) = d\geq 2$. Here we modify $L$ with $L_1 = L\otimes\mathcal{O}(A)^{h - d}$ about the base point $A$ for which we identify $\mathcal{M}_X^{ss}(r, d) \cong \mbox{Sym}^h(X)$. We recall that the collection $\mathcal{M}_X^{ss}(r, L)$ of semistable bundles with fixed determinant $L$ is identified with $\mathbb{P}(H^0(L_1))$ inside the symmetric power $\mbox{Sym}^h(X)$ (fiber of the Abel-Jacobi morphism). We will investigate the wobbly loci $\mathcal{W}(1, h, L),..., \mathcal{W}(h - 1, 2, 1..., 1, L)$ of fixed determinant $L$. $\mathcal{W}(1, h, L)$ is a finite set of points. Rest of these varieties are smooth closed subvarieties isomorphic to the total spaces of projective bundles $\mathbb{P}(E_{L_1}(h - s))$ for $s = h - 1,..., 2$ and $\mathbb{P}(E_{L_1}(h - s))$ admits $\mathbb{P}(E_{L_1}(h - s - 1))$ as a closed subvariety. For a chosen point $p\in X$ and chosen $s$ in the above integer bound, consider the holomorphic line bundle $M(s) = L_1\otimes\mathcal{O}(-s\cdot p)$ of degree $h - s$ on $X$. We first give a reader-friendly description of projective fibers. We identify $\mathbb{P}(H^0(M(s)))$ as $\mathbb{P}^{h - s - 1}$ based at a point $p$. Fix a nonzero section $\mathfrak{s}(p)$ vanishing at $p$. There is an injective morphism of $\mathbb{C}$-vector spaces $\otimes \mathfrak{s}(p): H^0(M(s + 1))\to H^0(M(s))$ (denoting with $\mathfrak{s}(p)$ a nonzero section of $\mathcal{O}(p)$). This leads to a closed embedding $\mathbb{P}(H^0(M(s + 1)))\to \mathbb{P}(H^0(M(s)))$ contributing the hyperplane at infinity.\\

We obtain the standard wobbly loci of a fixed determinant $L$ as a projective bundle on the elliptic curve $X$. We define a bundle $E_{L_1}(h - s)\to X$ with fiber $\{(x, s): s\in H^0(L_1(-s\cdot x))\}$ at $x\in X$. Existence of the bundle $E_{L_1}(h - s)$ is approved by Grauert's semicontinuity theorem (\cite{Hart} Corollary 12.9). Consider the Poincar\'e line bundle (\cite{Birkenhake1992ComplexAV}, Theorem 5.1 or \cite{Mich}) $\mathcal{L}\to\mbox{Pic}^1(X)\times X$ for which $\mathcal{L}_{|\{M\}\times X}\cong M$. Consider the sheaf $$\mathcal{L}' = {\pi_{\mbox{Pic}^1(X)}}_*(\mathcal{L}^{-s}\otimes \pi_X^*L_1)\to \mbox{Pic}^1(X)$$ that is locally free of rank $h - s$ same as the complex dimension of $H^0(L(-s\cdot x))$. Since $X\cong \mbox{Pic}^1(X)$, we obtain a bundle on $X$ associated to $\mathcal{L}'$ which we call $E_{L_1}(h - s)$. Finally, the wobbly loci of fixed determinant $L$ are obtained as $\mathbb{P}(E_{L_1}(h - s))$ that admits complex dimension $h - s$. There is an injective morphism of projective bundles: at $x\in X$ choose $s\in H^0(L_1(h - s - 1))$ defined by a divisor $\sum a_jx_j$ and map it to the effective divisor $\sum a_jx_j + x$. We pass to the action by nonzero complex numbers to write a closed immersion $\mathbb{P}(E_{L_1}(h - s - 1))\hookrightarrow\mathbb{P}(E_{L_1}(h - s))$ and organize this fact in the following filtration of closed subvarieties (a version of Drinfeld's claim for very stable bundles of a fixed determinant line bundle)
\begin{equation}\label{eq2}
\begin{tikzcd}
\emptyset \arrow[r, phantom, sloped, "\subsetneq"] & \mathcal{W}(1, h, L) \arrow[r, phantom, sloped, "\subsetneq"] \arrow[d, phantom, sloped, "\cong"] & \hdots \arrow[r, phantom, sloped, "\subsetneq"] & \mathcal{W}(h - 1, 2, 1..., 1, L) \arrow[r, phantom, sloped, "\subsetneq"] \arrow[d, phantom, sloped, "\cong"] & \mathcal{M}_X^{ss}(r, d, L) \arrow[d, phantom, sloped, "\cong"] \\
\emptyset \arrow[r, phantom, sloped, "\subsetneq"] & \{L(i)\}_{i = 1}^{h^2} \arrow[r, phantom, sloped, "\subsetneq"] & \hdots \arrow[r, phantom, sloped, "\subsetneq"] & \mathbb{P}(E_L(h - 2))\arrow[r, phantom, sloped, "\subsetneq"] & \mathbb{P}^{h - 1}.
\end{tikzcd}
\end{equation}
Here $L(1),..., L(h^2)$ count $h^2$ many distinct line bundles so that $L(i)^h = L_1$. This commutative diagram \ref{eq2} appears as the restriction of \ref{comm3} on the moduli space of semistable bundles of fixed determinant $L$. However, we are not yet able to conclude that $\mathbb{P}(E_{M_1}(n))\cong\mathbb{P}(E_{M_2}(n))$ (at least as smooth projective varieties, even if not as $\mathbb{P}^{n - 1}$-bundles on $X$) in case, $\deg(M_1) = \deg(M_2)$. There are more unsettled questions left. Denote the induced morphism $\pi:\mathbb{P}(E_{L_1}(n))\to X$. Then the graded cohomology ring $H^*(\mathbb{P}(E_{L_1}(n)))$ is the polynomial ring generated by the first Chern class of the tautological line bundle $c_1(\pi^*\mathcal{O}(1)) = \xi$ with coefficients from $H^*(X)$ under the constraint $\xi^n + c_1(E(n))\cdot \xi^{n - 1} = 0$. The restriction of the diagonal morphism $\Phi:\mathbb{P}(E_{L_1}(h - s))\to\mathbb{P}^{h - 1}$ induces a morphism $\Phi_*: H^*(\mathbb{P}(E_{L_1}(h - s)), \mathbb{Z})\to H^*(\mathbb{P}^{h - 1}, \mathbb{Z})$. There is a generator $\alpha\in H^2(\mathbb{P}^{h - 1}, \mathbb{Z})$ of $H^*(\mathbb{P}^{h - 1}, \mathbb{Z})$ so that $\alpha^h = 0$. Unfortunately, at this stage we do not know the exact formula of $\Phi_*(\xi)$ in terms of (the powers of) $\alpha$. 
\end{remark}

The symmetric powers of a curve $X$ being a generalized construction of the complex projective spaces deserve a few comments on cellular decompositions. The $n$-th projective space $\mathbb{P}^n$ is isomorphic to $\mbox{Sym}^n(\mathbb{P}^1)$ with a cell decomposition of $\mathbb{P}^n = \mathbb{C}^n\cup \mathbb{C}^{n - 1}\cup...\cup \mathbb{C}^0$. For higher genus curves, a cell decomposition of a symmetric product is not easily achievable. We provide a \textit{pseudo-cell decomposition} for a symmetric power of an elliptic curve $X$, which is possible because of its group law. First, observe that $\mbox{Sym}^{n + 1}(X)\backslash i_p\left(\mbox{Sym}^{n}(X)\right)$ is the open subset $\mbox{Sym}^{n + 1}(X\backslash\{p\})$ that avoids the divisors containing the point $p$. Also the subset $\mbox{Sym}^{n + 1}(X\backslash\{p\})$, for an arbitrary choice of a base point $p$, is a noncompact connected complex manifold unique up to an isomorphism. Let $q$ be another point on $X$. There is an automorphism on $X$ that maps $p$ to $q$: for any $z\in X$ consider the divisor $z - p + q$ that is linearly equivalent to a unique point $w\in X$. From this automorphism we can identify $\mbox{Sym}^n(X\backslash\{p\})\cong\mbox{Sym}^n(X\backslash\{q\})$ for all $n$. The noncompact curve $X\backslash\{p\}$ is an affine curve. We denote $\mbox{Sym}^n(X\backslash\{p\}) = V_n$ and for practical purposes we can choose a base point $p$ to define the map $i_p$. Note that $\overline{V_n} = \mbox{Sym}^n(X)$, thus $V_n$ is analogous to a \textit{Bruhat cell} and $\mbox{Sym}^n(X)$ is analogous to a \textit{Schubert variety}. It is customary to mention that $\mbox{Sym}^n(X) = V_n\cup V_{n - 1}\cup...\cup V_0$ (based at $p$) followed by a flag of closed subvarieties $$\mbox{Sym}^0(X)\subsetneq \mbox{Sym}^1(X)\subsetneq...\subsetneq \mbox{Sym}^n(X).$$

Let $L\to X$ be a line bundle of degree $n + 1$. The projective space $\mathbb{P}^n = \mathbb{P}(H^0(L))$ is a subvariety of $\mbox{Sym}^{n + 1}(X)$. The set of linearly equivalent effective divisors of $L$ that avoid $p$ (that is, the holomorphic sections of $L$ not vanishing at $p$) is $\mbox{Sym}^{n + 1}(X\backslash\{p\})\cap\mathbb{P}^n$. Observe that a global section of $L$ that vanishes at $0$ is a tensor product of a section of $L\otimes\mathcal{O}(p)^{-1} = L(- p)$ with a nonzero section of $\mathcal{O}(p)$. Thus we obtain, via the restriction of a pseudo-cell decomposition, the usual cell decomposition on projetive spaces below $$\mbox{Sym}^{n + 1}(X\backslash\{p\})\cap\mathbb{P}(H^0(L)) = \mathbb{P}(H^0(L))\backslash \mathbb{P}(H^0(L(-p)))\cong \mathbb{C}^n.$$ The divisor at infinity $\mathbb{P}(H^0(L(-p)))$ admits an very nice $n$-dimensional complex manifold structure with an underlying vector space structure. Let $v_1,..., v_n$ be linearly independent global sections of $L(-p)$. Then the linearly independent sections $v_1\otimes\mathfrak{s}(p),..., v_n\otimes\mathfrak{s}(p)$ of $L$ extend to a global basis of $H^0(L)$ attaching a global section $v_{n + 1}$. Obviously, $v_{n + 1}$ does not vanish at $p$. We consider the $(n + 1)$-tuples of coefficients of the ordered basis $\{v_1\otimes\mathfrak{s}(p),..., v_n\otimes\mathfrak{s}(p), v_{n + 1}\}$ with the nonzero contributions from $v_{n + 1}$.\\ 

We obtain similar straight forward cases of \textit{pseudo-cell decomposition} on the standard wobbly loci.
\begin{equation}
X\times\mbox{Sym}^n(X) = (X\times V_n)\cup...\cup (X\times V_0).
\end{equation}
We mimic the cell decomposition for the wobbly loci of a fixed determinant $L$ by restricting the previous cell decomposition. At a fixed base point $p$ we have a canonical closed embedding $i_p:\mathbb{P}(E_{L_1(-p)}(h - s))\to \mathbb{P}(E_{L_1}(h - s + 1))$ by $(x, D)\mapsto (x, D + p)$. We obtain the open subset $$\mathbb{P}(E_{L_1}(h - s + 1))\backslash i_p(\mathbb{P}(E_{L_1(-p)}(h - s))) = \mathbb{P}(E_{L_1}(h - s + 1)\backslash E_{L_1}(h - s))$$ and 
\begin{equation}
\mathbb{P}(E_{L_1}(h - s + 1)) = \mathbb{P}(E_{L_1}(h - s + 1)\backslash E_{L_1}(h - s))\cup...\cup \mathbb{P}(E_{L_1}(1))
\end{equation}
Here we are yet to conclude if the holomorphic structure of the open set $\mathbb{P}(E_{L_1}(h - s + 1)\backslash E_{L_1}(h - s))$ depends at all on the choice of a base point $p$.
\begin{remark}
As a final exercise, we compute the Poincar\'e polynomial of a standard wobbly sublocus $\mathcal{W}(h - s + 1, s, \underbrace{1,...1}_{(h - s)~\text{times}})$ and of a standard wobbly sublocus $\mathcal{W}(h - s + 1, s, \underbrace{1,...1}_{(h - s)~\text{times}}, L)$ with a fixed determinant $L$.  We do this by applying a K\"unneth decomposition on the Poincar\'e polynomials of $X$, $ \mbox{Sym}^{h - s}$, and projective bundles on a curve. The Betti numbers of symmetric products of a curve are by now classical, and were computed in \cite{Mac}. We obtain
\begin{align*}
&\mathcal{P}(\mathcal{W}(h - s + 1, s, \underbrace{1,...1}_{(h - s)~\text{times}}), z) = (1 + 2z + z^2)\cdot(1 + 2z +...+ 2z^{2(h - s) -1} + z^{2(h - s)})\\
& = z^{2(h - s) + 2} + 4z^{2(h - s) + 1} + 7z^{2(h - s)} + 8z^{2(h - s) - 1} +...+ 8z^3 + 7z^2 + 4z + 1
\end{align*} and 
\begin{align*}
&\mathcal{P}(\mathcal{W}(h - s + 1, s, \underbrace{1,...1}_{(h - s)~\text{times}}, L), z) = \sum\limits_{i = 0}^{2(h - s)} h^i(\mathbb{P}(E_L(h - s)))\cdot z^i\\
& = \begin{cases}
    h^2;~ h = s;\\
    1 + 2\left(z +...+ z^{2(h - s) - 1}\right) + z^{2(h - s)};~ h < s
\end{cases}\end{align*} 
\end{remark}
Here, $s$ runs from $2$ to $h$. We implement the formula $h^i(\mathbb{P}(E_L(h - s)), \mathbb{Z}) = \sum_{t = 0}^{h - s - 1} h^{i - 2t}(X, \mathbb{Z})$.

\printbibliography

\end{document}